\newcommand{\R}{\mathbb{R}}
\newcommand{\He}{\mathop{\mathrm{He}}}
\newtheorem{theorem}{Theorem}
\newtheorem{lemma}[theorem]{Lemma}
\newtheorem{proposition}[theorem]{Proposition}
\newenvironment{proof}[1]{
\trivlist \item[\hskip \labelsep{\bf #1}]}{\hfill\mbox{$\Box$}
\endtrivlist}
\title{Geodesic diameter of sets defined by few quadratic equations and
inequalities}
\date{\today}
\author{Michel Coste and Seydou
Moussa}
\begin{document}
 \maketitle
 
 \begin{abstract}
  We prove a bound for the geodesic diameter of a subset of the unit ball in
$\mathbb{R}^n$ described by a fixed number of quadratic equations and
inequalities, which
is polynomial in $n$, whereas the known bound for general degree is exponential
in $n$. Our proof uses methods borrowed from D'Acunto and Kurdyka (to deal with
the geodesic diameter) and from Barvinok (to take advantage of the quadratic
nature).\\
\textit{Mathematics Subject Classification}: 14P10
 \end{abstract}

\section{Introduction}

Quantitative bounds on the topology or the geometry of semialgebraic sets are
often given in terms of the number $k$ of polynomials used to describe them,
the maximal degree $d$ of these polynomials and the number $n$ of variables. As
a general rule, these bounds are exponential in the number of variables. One
of the most famous bounds on the topology is the Petrovskii-Oleinik-Thom-Milnor
bound $d(2d-1)^{n-1}$ on the sum of the Betti numbers of a real algebraic sets
(this bound and many others can be found in \cite{BPR}). An interesting example
of a bound concerning the geometry is the one given by D'Acunto and Kurdyka
on the geodesic diameter of (a bounded part of) a real algebraic or
semialgebraic set, which is of the form $O(d)^{n-1}$ (\cite{DK1,
DK2, SM}).\par\medskip

Semialgebraic sets defined by quadratic polynomials do have a specific
behaviour, in contrast with the general exponential bounds. The first result in
this direction was obtained by Barvinok \cite{Bk}, who proved a polynomial bound
of type $n^{0(k)}$ for the sum of the Betti numbers of subsets of $\R^n$ defined
by a fixed number $k$ of quadratic inequalities. There are now several papers
(for instance \cite{GPa,Ba1,BK,BPaR1,BPaR2}) with bounds polynomial in the
number of variables concerning the quadratic case. \par\medskip

In the present paper we show the same polynomial (in the number $n$ of
variables) behaviour for a metric invariant: our main result is a bound of
type $n^{O(k)}$ for the geodesic diameter of (a bounded part of) a 
semialgebraic subset of $\R^n$ defined by $k$ quadratic equations and
inequalities. Section 2 of the paper establishes the result in the key case of a
smooth complete intersection of quadrics. In order to treat this case, we extend
and apply the methods of D'Acunto and Kurdyka on the one hand and of Barvinok on
the other hand. \par\medskip

Specifically, we use the idea of D'Acunto and Kurdyka of
controlling the geodesic diameter by the length of trajectories of the gradient
of a Morse function, which is in turn bounded by the length of the ``thalweg''
of this function - the locus of points where the level sets are the most far
apart - which has the advantage of being semialgebraic. A tricky point is to
show that this thalweg can be assumed of dimension 1. The complication with
respect to the work of D'Acunto and Kurdyka comes from the fact that
we have to deal with a complete intersection and cannot reduce to the
hypersurface case, which would destroy the quadratic nature of the equations.
The length of the thalweg is then estimated via Cauchy-Crofton formula, by
counting intersection points with a hyperplane. This eventually leads to a
system of equations containing a big (of size $n$) linear system in the
variables $x\in\R^n$, with parameters including Lagrange multipliers, and a few
($O(k)$) other equations; the linearity in $x$ comes, of course, from the fact
that gradients of quadratic functions are linear. Here we use Barvinok's method
of solving the linear system in $x$ (as a function of the parameters and a few
free variables among $x$) and carrying this solution in the other equations.
The point is to show that the number of free variables, that is the corank of
the linear system, can be assumed to remain small (controlled by $k$) when the
parameters vary; this will give a bound polynomial in $n$ of degree $O(k)$ on
the number of solutions of the complete system. The main difficulty here with
respect to the situation considered by Barvinok is that the matrix of the
system does not depend linearly on the parameters. \par\medskip

Section 3 of the paper shows how the general case can be reduced to the case of
a smooth complete intersection of quadrics. We conclude with a few questions
related to the fact that the bound obtained is surely far from optimal.

\section{Smooth complete intersection of quadrics}

 We denote by $\R_2[X_1,\ldots,X_n]$ the space of polynomials of degree $\leq
2$. Let $Q_i \in \R_2[X_1,\ldots,X_n]$ for $i=1,\ldots,k$. Set
 $$X =\{ x\in \R^n \mid Q_1(x)=\ldots= Q_k(x) = 0\}\;, \qquad M =X\cap
\overline{B}^n\;,$$
where $\overline{B}^n$ denotes the closed unit ball (the open unit ball will be
denoted by $B^n$).

 We assume in this section that 
 \begin{itemize}
  \item[(i)] $M$ is a union of connected components of $X$, all contained in
$B^n$,
and a smooth complete intersection of codimension $k$ in  $\R^n$ . 
 \end{itemize}
In particular, the gradients $\nabla Q_i(x)$ are linearly independant at every
point $x\in M$.\par\medskip
Our aim is to bound the sum of the geodesic diameters of the
connected components $M_i$ of $M$. Recall that the geodesic distance between two
points $x$ and $y$ in $M_i$ is the infimum of the lengths of paths joining $x$
to $y$ inside $M_i$, and that the geodesic diameter of $M_i$ is the supremum of
the geodesic distances between two points of $M_i$.

\subsection{The way by D'Acunto and Kurdyka: trajectories of gradient and
thalweg}
 
 Following D'Acunto and
Kurdyka \cite{DK1, DK2}, we shall use a Morse function on $M$. For every $M$
satisfying (i), it is possible to find $P\in \R_2[X_1,\ldots,X_n]$ such
that
\begin{itemize}
 \item[(ii)] the restriction $P|_M$ of $P$ to $M$ is a Morse
function, and the critical values at the different critical points are distinct.
\end{itemize}
Observe that the set of $(P,Q_1,\ldots,Q_k)$ satisfying (i) and
(ii) is a non empty open subset $\mathcal{U}$ of $(\R_2[X_1,\ldots,X_n])^{k+1}$.
\par\medskip

We denote by $\nabla_M P(x)$ the gradient of the restriction $P|_M$  at a
point $x\in M$. This is the orthogonal projection of $\nabla P(x)$ onto the
tangent space $T_xM$, which is the orthogonal complement of the linear space
spanned by the $\nabla Q_i(x)$ for $i=1,\ldots,k$. Hence, we have
\begin{equation}\label{gradMP}
 \nabla_M P(x) = \nabla P(x) - \sum_{i=1}^k u_i(x)\,\nabla Q_i(x)\;,
\end{equation}
where
\begin{equation}\label{Gu}
 G\,\left( \begin{array}{c} u_1\\ \vdots \\ u_k\end{array}\right) =
 \left( \begin{array}{c} \langle \nabla Q_1, \nabla P\rangle\\ \vdots \\
\langle \nabla Q_k, \nabla P\rangle\end{array}\right)\quad \mbox{and}\quad
G = \big( \langle \nabla Q_i, \nabla Q_j\rangle\big)_{i,j=1,\ldots,k}\;.
\end{equation}
Here $\langle \cdot\mid\cdot\rangle$ denotes the standard scalar product on
$\R^n$, and $\Vert\cdot\Vert$ will denote the standard euclidean norm.
\par\medskip

A trajectory of the normalized gradient of $P$ on $M$ will be a continuous,
piecewise smooth curve $x : I \to M$, where $I$ is an interval 
of $\R$, which satisfies $x'(t)= \nabla P(x(t))/\Vert\nabla P(x(t))\Vert$ for
every $t\in I$ such that $\Vert\nabla P(x(t))\Vert \neq 0$.\par\medskip

The idea of D'Acunto and Kurdyka to bound the geodesic diameter of a connected
component $M_i$ using trajectories of the normalized gradient is clear in the
case when the Morse function $P$ has only one minimum and one maximum: two
trajectories issued from points $x$ and $y$ in $M_i$ meet at the point where
$P$ attains its maximum on $M_i$. Hence the geodesic diameter of $M_i$ is
bounded by twice the maximal length of trajectories on $M_i$.\par\medskip 

In order to bound the length of the trajectories of the normalized gradient,
D'Acunto and Kurdyka use the ``thalweg`` of $P$ on $M$: this is the set of
points $x\in M$ such that $\Vert\nabla_M
P\Vert$ has a local minimum at $x$ on the compact hypersurface
$P^{-1}(P(x))\cap M$. Roughly speaking, the thalweg is the place where the
level hypersurfaces of $P$ on $M$ are the most far apart. The thalweg of $P$ on
$M$ is contained in the set $\theta_M(P)$ of critical
points of $\Vert\nabla_M P\Vert$ restricted to level sets of $P|_M$. This is a
semialgebraic subset of $M$.

\begin{proposition}[D'Acunto-Kurdyka \cite{DK2}, Lemma 5.2]
If $\theta_M(P)$ has dimension 1, then the sum of the geodesic diameters of
the connected components of $M$ is bounded by twice the length of $\theta_M(P)$.
\end{proposition}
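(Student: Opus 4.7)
The plan is to bound the geodesic diameter of each connected component $M_i$ of $M$ by the length of a normalized gradient trajectory terminating at the maximum of $P|_{M_i}$, and then to bound such trajectory lengths by the length of the thalweg via a coarea-type comparison. I would fix a component $M_i$; by (ii), $P|_{M_i}$ has a unique global maximum $p_i$, a nondegenerate critical point of top Morse index $\dim M_i$. The triangle inequality gives $\mathrm{diam}(M_i)\leq 2\sup_{x\in M_i} d_{M_i}(x,p_i)$, so it suffices to bound $d_{M_i}(x,p_i)$ uniformly. Since $p_i$ has top index, its ascending‐gradient stable manifold $W^s(p_i)\subset M_i$ is open and dense (the complement is a finite union of positive‐codimension stable manifolds of lower critical points). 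For $x\in W^s(p_i)$ the normalized‐gradient trajectory $\gamma_x$ terminates at $p_i$, and parametrizing by $c=P(\gamma_x)$ gives
$$L(\gamma_x)\ =\ \int_{P(x)}^{P(p_i)}\frac{dc}{\Vert\nabla_M P(\gamma_x(c))\Vert}\ \leq\ \int_{P_{\min}}^{P_{\max}}\frac{dc}{m(c)},$$
where $m(c)=\min\{\Vert\nabla_M P(y)\Vert:y\in M_i,\ P(y)=c\}$ is attained on the compact level set; the improper integral converges by the Morse lemma at $p_i$. Since $d_{M_i}(x,p_i)\leq L(\gamma_x)$ on $W^s(p_i)$, density and continuity of $d_{M_i}$ extend the bound to all $x\in M_i$.

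Next I would connect $m(c)$ to the thalweg. A minimizer of $\Vert\nabla_M P\Vert$ on $M_i\cap P^{-1}(c)$ is \emph{a fortiori} a local minimum, hence lies in $T:=\theta_M(P)\cap M_i$. By hypothesis $\dim T\leq 1$, and $T$ is compact and semialgebraic, so it is a finite union of smooth arcs meeting at finitely many points. For any arc $\tau\subset T$ the pointwise bound $|\langle\nabla_M P,\tau'\rangle|\leq\Vert\nabla_M P\Vert\cdot\Vert\tau'\Vert$ rearranges to $ds\geq|dP(\tau)|/\Vert\nabla_M P(\tau)\Vert$.

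The main remaining step is to extract from $T$ a semialgebraic subfamily of arcs whose images under $P$ cover $[P_{\min},P_{\max}]$ and along which $\Vert\nabla_M P\Vert=m(P)$, and then to apply the above pointwise estimate together with the coarea formula for $1$-dimensional rectifiable sets to conclude $L(T)\geq\int_{P_{\min}}^{P_{\max}}dc/m(c)$. Combined with the previous step this yields $d_{M_i}(x,p_i)\leq L(T)$, hence $\mathrm{diam}(M_i)\leq 2L(T)$, and summing over components gives the inequality. The delicate point is this last step: the level‐wise global minimum need not be traced by a single branch of $T$, so one must glue local‐minimum branches into a semialgebraic selection covering the full $P$-range (possibly switching branches at values where two local minima cross) and carefully handle the neighborhoods of the finitely many critical values of $P|_{M_i}$ where $m(c)\to 0$ and branches of $T$ can ramify.
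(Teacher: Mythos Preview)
The paper does not prove this proposition; it is quoted verbatim as Lemma~5.2 of \cite{DK2}, so there is no in-paper argument to compare against. Your overall two-step strategy --- bound the length of every normalized gradient trajectory by $\int dc/m(c)$, then bound this integral by the length of the thalweg via a coarea-type estimate --- is exactly the D'Acunto--Kurdyka idea sketched in the paragraphs preceding the statement.

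There is, however, a genuine gap in your first step. You assert that the ascending stable manifold $W^s(p_i)$ of the global maximum $p_i$ is open and dense in $M_i$ because ``the complement is a finite union of positive-codimension stable manifolds of lower critical points.'' But condition~(ii) says only that $P|_M$ is Morse with pairwise distinct critical values; it does \emph{not} rule out several local maxima on a given component $M_i$. Every local maximum has top Morse index $\dim M_i$ and therefore an \emph{open} stable manifold under the ascending flow; when there is more than one, $W^s(p_i)$ is open but certainly not dense, and a generic point $x$ lying in another basin has its trajectory $\gamma_x$ terminate at a different local maximum, so you obtain no bound on $d_{M_i}(x,p_i)$. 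The paper itself flags this difficulty when it says the argument ``is clear in the case when the Morse function $P$ has only one minimum and one maximum.'' Passing to the general case is not a cosmetic patch: in \cite{DK2} it is handled by a connectivity argument that routes between critical points (all of which lie in $\theta_M(P)$), and obtaining the constant $2$ rather than something larger requires care. Your second step --- selecting from $\theta_M(P)$ a semialgebraic family of arcs realizing the levelwise minimum $m(c)$ and applying a coarea inequality --- is, as you correctly note, the other delicate point, and is treated in \cite{DK2} essentially along the lines you indicate.
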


Our first task is to ensure that $\theta_M(P)$ has dimension 1, at least
generically. Specifically we show

\begin{proposition}\label{dimone}
 There is an open dense subset $\mathcal{U'}$ of $\mathcal{U}$ such that, for
every $(P,Q_1,\ldots,Q_k)$ in $\mathcal{U}'$, the corresponding set
$\theta_M(P)$ has dimension 1.
\end{proposition}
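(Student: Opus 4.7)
The plan is to prove Proposition \ref{dimone} by a parametric transversality (Sard-type) argument. Set $d = n-k$. On the $d$-dimensional manifold $M$, the set $\theta_M(P)$ is the locus where the two $1$-forms $d(P|_M)$ and $d(\|\nabla_M P\|^2|_M)$ are linearly dependent in $T_x^*M$, equivalently where the map $\Phi\colon M \to \R^2$, $x\mapsto (P(x),\|\nabla_M P(x)\|^2)$ fails to be an immersion. This is a rank-$\leq 1$ condition of expected codimension $d-1$, so the expected dimension of $\theta_M(P)$ is $1$, and the goal is to realise this expected dimension for generic $(P,Q_1,\ldots,Q_k)\in\mathcal{U}$.

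Concretely, I would rewrite ``$x\in\theta_M(P)$'' using Lagrange multipliers: there exist $\lambda_0,\lambda_1,\ldots,\lambda_k\in\R$ such that
\[
F(x,\lambda,P,Q) := \nabla\bigl(\|\nabla_M P\|^2\bigr)(x) - \lambda_0\,\nabla P(x) - \sum_{i=1}^k \lambda_i\,\nabla Q_i(x) = 0 \in \R^n.
\]
Under (i) the $\nabla Q_j(x)$ are independent on $M$ and, away from the finitely many critical points of the Morse function $P|_M$, so are $\nabla P(x),\nabla Q_1(x),\ldots,\nabla Q_k(x)$; hence the $\lambda_i$ are uniquely determined by $x$ on a Zariski-open subset of $M$. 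I then form the universal incidence variety
\[
\mathcal{I} := \bigl\{(x,\lambda,P,Q)\in B^n\times\R^{k+1}\times\mathcal{U} : Q_j(x)=0\ \forall j,\ F(x,\lambda,P,Q)=0\bigr\},
\]
on which the projection $\pi\colon\mathcal{I}\to\mathcal{U}$ has fibres that map finite-to-one onto $\theta_M(P)$.

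The key step is to verify that $(0,0)\in\R^k\times\R^n$ is a regular value of the map $(x,\lambda,P,Q)\mapsto(Q_j(x),\,F)$. The first $k$ coordinates are easily made transverse: perturbing $Q_j$ by a constant gives the identity on that block without disturbing $F$ (which depends only on derivatives of $Q_j$). For the last $n$ coordinates I would exploit the observation that $F$ depends on $(P,Q_j)$ only through their $2$-jets at $x$, and for quadratic polynomials the $2$-jet at $x$ determines the polynomial up to an irrelevant constant. Perturbations of $P$ within $\R_2[X_1,\ldots,X_n]$ thus freely shift $\nabla P(x)$ and $\He(P)$, and analogous freedom is available for the $Q_j$'s. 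Provided this richness spans $\R^n$ on the $F$-block, $\mathcal{I}$ is smooth of dimension $1+\dim\mathcal{U}$, and Sard's theorem applied to $\pi$, together with the semialgebraicity of all the data, produces an open dense subset $\mathcal{U}'\subset\mathcal{U}$ over which every fibre of $\pi$ is a smooth $1$-manifold. Hence $\theta_M(P)$ has dimension $1$ for $(P,Q)\in\mathcal{U}'$.

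The principal obstacle is precisely this transversality computation, because $F$ depends nonlinearly on the parameters: the multipliers $u_i$ of (\ref{gradMP}) are rational in the entries of the Gram matrix $G$, and $\nabla\|\nabla_M P\|^2$ couples the Hessians $\He(P)$, $\He(Q_j)$ with the $u_i$'s. One must check that the partial differential of $F$ with respect to $(P,Q_1,\ldots,Q_k)$ has full rank $n$ at every point of $\mathcal{I}$. A careful unwinding should reduce this to a finite-dimensional linear-algebraic statement about how linear and purely quadratic perturbations of the $2$-jets modify $F$, and I expect this to close the argument.
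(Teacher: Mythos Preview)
Your overall strategy---parametric transversality on an incidence variety, then Sard---is the same as the paper's. The gap is exactly where you locate it: you never carry out the surjectivity check for the differential of $F$, and the nonlinear dependence of $F$ on $(P,Q)$ through the Lagrange multipliers $u_i(x)=u_i(P,Q,x)$ makes a direct computation genuinely awkward. The paper resolves this with two devices you are missing.

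First, it promotes $u=(u_1,\ldots,u_k)$ to \emph{independent} variables and adds the Gram equation $G(x)u=(\langle\nabla Q_i,\nabla P\rangle)_i$ as an extra component of the map. This removes the rational dependence of $F$ on $(P,Q)$: with $u$ free, the quantity $\He(P)-\sum u_i\He(Q_i)$ is linear in the Hessian parameters. Second, rather than asking that the full Lagrange equation $F=0$ be cut out transversally, the paper factors the condition through a universal target. By Lemma~\ref{eigenvector}, $x\in\theta_M(P)$ iff $\nabla_M P(x)$ is an eigenvector of $p_x\circ(\He(P)-\sum u_i\He(Q_i))$; this is encoded by membership of the triple $(\text{Hessian combination},\,T_xM,\,\nabla P)$ in a fixed semialgebraic set $\Sigma\subset\mathrm{Sym}_n\times\mathbb{G}_{n-k,n}\times\R^n$ of codimension $n-k-1$ (Lemma~\ref{Sigma}). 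One then only has to check that the map $\Psi$ sending $(x,(H,L,c),u)$ to this triple (together with the Gram and defining equations) is a submersion, and this is immediate because each of the five components depends \emph{linearly} on a separate block of perturbation parameters $(H,L_0,L_1,\ldots,L_k,c,u)$ (Lemma~\ref{submersion}). The codimension count $n-k-1+k+k=n+k-1$ then gives $\dim\theta_{\widetilde M}(\widetilde P)=1$.

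In short: your plan is right, but ``I expect this to close the argument'' is precisely the content of the proof. The factorization through $\Sigma$ and the promotion of $u$ to a free variable are what turn the intractable-looking transversality into a sequence of trivial block checks.
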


D'Acunto and Kurdyka show a similar result in their paper (\cite{DK2},
Proposition 4.5), but they consider only the case where $M$ is a hypersurface,
which allows to proceed in a simpler way. Nevertheless, we shall follow the
main lines of their proof.\par\medskip 

The set $\theta_M(P)$ may be described, using Lagrange multipliers, as the set
of points $x\in M$ such that there exist $\lambda,\mu_1,\ldots,\mu_k$ in $\R$
for which
\begin{equation}\label{Lagrange}
 \nabla(\Vert\nabla_M P\Vert^2)(x)  =  \lambda \nabla P(x) + \sum_{i=1}^k
\mu_i\nabla Q_i(x)\;.
\end{equation}
Using the formulas (\ref{gradMP}) and (\ref{Gu}), and taking into account the
fact
that $\nabla_M P$ is orthogonal to each $\nabla Q_i$, we obtain
\begin{equation}\label{gradnormgradMP}
\nabla(\Vert \nabla_M P\Vert^2) = 2 (\He(P)- \sum_{i=1}^k u_i \He(Q_i))\,
\nabla_M
P\;,
\end{equation}
where $\He(P)$ (resp. $\He(Q_i)$) denotes the Hessian matrix of $P$ (resp. of
$Q_i$).
\par\medskip

For $x\in M$, we denote by $p_x$ the orthogonal projection on the tangent space
$T_xM$.

\begin{lemma}\label{eigenvector}
For $x\in M$, we have $x\in \theta_M(P)$ if and only if $\nabla_M P(x)$ is
either null or an eigenvector
of  $p_x\circ\big(\He(P)(x)-\sum_{i=1}^k u_i(x)\,\He(Q_i)(x)\big)$.
\end{lemma}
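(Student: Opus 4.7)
The plan is to read off the lemma directly from the Lagrange-multiplier characterization of $\theta_M(P)$ given by (\ref{Lagrange}), combined with the explicit formula (\ref{gradnormgradMP}) for $\nabla(\|\nabla_M P\|^2)$, by projecting onto the tangent space $T_xM$.

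First I would substitute (\ref{gradnormgradMP}) into (\ref{Lagrange}) to rewrite the condition $x\in\theta_M(P)$ as the existence of $\lambda,\mu_1,\ldots,\mu_k\in\R$ such that
\[
2\bigl(\He(P)(x)-\textstyle\sum_{i=1}^k u_i(x)\,\He(Q_i)(x)\bigr)\,\nabla_M P(x)
\;=\;\lambda\,\nabla P(x)+\sum_{i=1}^k \mu_i\,\nabla Q_i(x).
\]
Then I would apply the orthogonal projection $p_x$ to both sides. By definition of $\nabla_M P$ we have $p_x(\nabla P(x))=\nabla_M P(x)$, and since $T_xM$ is the orthogonal complement of the span of the $\nabla Q_i(x)$, we have $p_x(\nabla Q_i(x))=0$. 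This gives
\[
2\,p_x\!\circ\!\bigl(\He(P)(x)-\textstyle\sum_{i=1}^k u_i(x)\,\He(Q_i)(x)\bigr)\,\nabla_M P(x)
\;=\;\lambda\,\nabla_M P(x),
\]
which is exactly the statement that $\nabla_M P(x)$ is either zero or an eigenvector of $p_x\circ(\He(P)(x)-\sum_i u_i(x)\,\He(Q_i)(x))$, with eigenvalue $\lambda/2$.

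For the converse I would show that if $\nabla_M P(x)$ is such an eigenvector (with eigenvalue $\alpha$, say), then one can recover suitable multipliers. Setting $\lambda=2\alpha$, the vector
\[
v\;=\;2\bigl(\He(P)(x)-\textstyle\sum_{i=1}^k u_i(x)\,\He(Q_i)(x)\bigr)\,\nabla_M P(x)-\lambda\,\nabla P(x)
\]
satisfies $p_x(v)=0$ by the eigenvector hypothesis together with $p_x(\nabla P(x))=\nabla_M P(x)$; hence $v$ lies in the orthogonal complement of $T_xM$, which by assumption (i) is spanned by the linearly independent vectors $\nabla Q_1(x),\ldots,\nabla Q_k(x)$. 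We can therefore write $v=\sum_i \mu_i\,\nabla Q_i(x)$ for uniquely determined $\mu_i$, which is precisely the Lagrange condition (\ref{Lagrange}). (The case $\nabla_M P(x)=0$ is trivial: take $\lambda=0$ and use the same argument on the remaining vector.)

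There is no real obstacle here: the lemma is essentially a reformulation, and the only substantive input is the linear independence of the $\nabla Q_i(x)$ on $M$, which is guaranteed by hypothesis (i) and ensures that the normal component of any vector can be expanded uniquely in the basis $\{\nabla Q_i(x)\}$. The one point to watch is not to confuse $\nabla P$ with $\nabla_M P$ when projecting; the identity $p_x(\nabla P)=\nabla_M P$ follows directly from the decomposition (\ref{gradMP}).
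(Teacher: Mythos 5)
Your proof is correct and follows the same approach as the paper: the paper's proof is the single line ``apply $p_x$ to Equation~(\ref{Lagrange}), using (\ref{gradnormgradMP})'', and you have simply spelled this out, including making explicit the (easy but implicit) converse via the unique expansion of the normal component in the basis $\{\nabla Q_i(x)\}$.
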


\begin{proof}{Proof.}
Apply the projection $p_x$ to Equation (\ref{Lagrange}), using
(\ref{gradnormgradMP}).
\end{proof}

We denote by $\mathrm{Sym}_n$ the space of symmetric real matrices of size $n$
and 
by $\mathbb{G}_{n-k,n}$ the Grassmannian of linear subspaces of $\R^n$ of
dimension $n-k$. If $N\in \mathbb{G}_{n-k,n}$, we denote by $p_N$ the
orthogonal projection onto $N$. Set
\begin{equation}
 \widehat\Sigma=\{ (A,N,V,\lambda)\in \mathrm{Sym}_n\times \mathbb{G}_{n-k,n}
\times \R^n \times \R \mid p_N(A(p_N(V))) = \lambda p_N(V)\}
\end{equation}
and let $\Sigma \subset  \mathrm{Sym}_n\times \mathbb{G}_{n-k,n}
\times \R^n$ be the image of $\widehat{\Sigma}$ by the projection
$(A,N,V,\lambda)\mapsto (A,N,V)$.
The set $\Sigma$ is interesting for us because, by Lemma \ref{eigenvector},
$\theta_M(P)$ is the set of $x\in M$ such that 
$$\big(\He(P)(x)-\sum_{i=1}^k u_i(x)\,\He(Q_i)(x), T_xM, \nabla P\big) \in
\Sigma\;. $$

\begin{lemma}\label{Sigma}
 The semialgebraic set $\Sigma$ is a subset of codimension $n-k-1$
of $\mathrm{Sym}_n\times \mathbb{G}_{n-k,n}
\times \R^n$
\end{lemma}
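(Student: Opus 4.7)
The plan is to analyze $\Sigma$ through the projection $\pi : \Sigma \to \mathrm{Sym}_n \times \mathbb{G}_{n-k,n}$, $(A,N,V) \mapsto (A,N)$. For fixed $(A,N)$, introduce the symmetric endomorphism $A_N := p_N \circ A|_N$ of the Euclidean space $N$, which has dimension $n-k$. Unravelling the definition of $\widehat\Sigma$ and projecting out $\lambda$, the fiber $\pi^{-1}(A,N)$ is exactly the set of $V \in \R^n$ such that $p_N(V)$ is either zero or an eigenvector of $A_N$; equivalently, it is the union over the spectrum of $A_N$ of the subspaces $p_N^{-1}\bigl(\ker(A_N - \lambda\,\mathrm{Id}_N)\bigr)$.

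First I would treat the generic case. On the open dense stratum $U_0$ of the base where $A_N$ has $n-k$ distinct eigenvalues, each eigenspace $L_i \subset N$ is a line, and the fiber of $\pi$ is the union of the $n-k$ linear subspaces $N^\perp \oplus L_i \subset \R^n$, each of dimension $k+1$. Hence $\Sigma$ has codimension exactly $n-k-1$ above $U_0$, which yields the lower bound on $\mathrm{codim}\,\Sigma$.

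For the upper bound I would stratify $\mathrm{Sym}(N)$ according to the multiplicity sequence $(m_1,\ldots,m_r)$ of the eigenvalues of $A_N$ (with $\sum_i m_i = n-k$), using the classical formula giving codimension $c = \bigl((n-k)-r\bigr) + \sum_i \binom{m_i}{2}$ for such a stratum. Since for each fixed $N$ the map $A \mapsto A_N$ is a linear surjection $\mathrm{Sym}_n \to \mathrm{Sym}(N)$, this pulls back to a semialgebraic stratum of the same codimension $c$ in $\mathrm{Sym}_n \times \mathbb{G}_{n-k,n}$. Above it, the fiber of $\pi$ is $\bigcup_i (N^\perp \oplus E_i)$ with $\dim E_i = m_i$, which has maximal dimension $k + M$ where $M := \max_i m_i$. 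The codimension of the corresponding piece of $\Sigma$ in the full ambient space is therefore $c + (n-k-M)$.

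The remaining step, which I expect to be the main (though elementary) obstacle, is the inequality $c \geq M - 1$ with equality only on the open stratum. It follows from $(n-k) - r = \sum_i (m_i - 1) \geq M - 1$ together with $\sum_i \binom{m_i}{2} \geq 0$, and equality in both forces $M = 1$. Consequently, every non-generic stratum contributes codimension strictly greater than $n-k-1$ to $\Sigma$, and the required identity $\mathrm{codim}\,\Sigma = n-k-1$ follows. The only subtle point in the write-up is to justify that the stratification of $\mathrm{Sym}(N)$ really makes sense as a (semialgebraic) stratification of the Grassmannian bundle $\bigsqcup_N \mathrm{Sym}(N)$ with the claimed codimensions, which is standard but needs to be stated carefully.
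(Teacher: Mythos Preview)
Your argument is correct, but it takes a genuinely different route from the paper. The paper works upstairs in $\widehat\Sigma$ (keeping the eigenvalue variable $\lambda$) and reduces, via the orthogonal action on $N$, to showing that the map $\varphi(B,W,\lambda)=(B-\lambda I_{n-k})W$ from $\mathrm{Sym}_{n-k}\times\R^{n-k}\times\R$ to $\R^{n-k}$ is a submersion wherever $W\neq 0$; this immediately gives $\mathrm{codim}\,\widehat\Sigma=n-k$, and then one observes that the projection forgetting $\lambda$ is generically injective, so $\dim\Sigma=\dim\widehat\Sigma$. No stratification by eigenvalue multiplicities is needed. Your approach instead fibers $\Sigma$ over $\mathrm{Sym}_n\times\mathbb{G}_{n-k,n}$ and balances fiber dimension against base-stratum codimension via the classical formula for the multiplicity stratification of $\mathrm{Sym}_{n-k}$; this is longer but gives a more explicit geometric picture of $\Sigma$ as a union of vector bundles over strata, and the key inequality $c\geq M-1$ is indeed the elementary heart of it. One minor slip: you have the labels ``lower bound'' and ``upper bound'' on $\mathrm{codim}\,\Sigma$ reversed. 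The generic stratum $U_0$ gives $\mathrm{codim}\,\Sigma\leq n-k-1$ (an \emph{upper} bound on the codimension, since the piece over $U_0$ already has the right dimension), and it is the analysis of the non-generic strata that gives the \emph{lower} bound $\mathrm{codim}\,\Sigma\geq n-k-1$.
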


\begin{proof}{Proof.}
We begin by computing the dimension of the algebraic set
$\varphi^{-1}(0)$, where 
$$\varphi : \mathrm{Sym}_{n-k}\times  \R^{n-k} \times \R \longrightarrow
\R^{n-k}$$
is defined by $\varphi(B,W,\lambda)=(B-\lambda I_{n-k})\,W$. The partial
derivative $\partial\varphi/\partial B$ at $(B,W,\lambda)$ is the linear
mapping $T \mapsto TW$ from $\mathrm{Sym}_{n-k}$ to $\R^{n-k}$.
Hence, $\varphi$ is submersive at points $(B,W,\lambda)$ such that $W\neq 0$.
It follows that the codimension of $\varphi^{-1}(0)$ in
$\mathrm{Sym}_{n-k}\times  \R^{n-k} \times \R$ is $n-k$.\par
Next we compute the codimension in $\mathrm{Sym}_n
\times \R^n \times \R$ of 
$$\widehat{\Sigma}_N =\{(A,V,\lambda) \in \mathrm{Sym}_n
\times \R^n \times \R \mid p_N(A(p_N(V))) = \lambda p_N(V)\}\;,$$
for $N\in\mathbb{G}_{n-k,n}$. It is sufficient to do this for the subspace
$N_0=\R^{n-k}\times \{0\}$; indeed, if $U$ is an orthogonal matrix carrying
$N_0$ to $N$, then $\widehat{\Sigma}_{N}$ is the image of
$\widehat{\Sigma}_{N_0}$ by the isomorphism $(A,V,\lambda)\mapsto
(U\,A\,{}^t\!U, UV, \lambda)$. Now $\widehat{\Sigma}_{N_0}$ is the inverse image
of $\varphi^{-1}(0)$ by the linear surjection $\mathrm{Sym}_n
\times \R^n \times \R \to \mathrm{Sym}_{n-k}
\times \R^{n-k} \times \R$ obtained by truncating symmetric matrices at their
first $n-k$ rows and columns  and vectors at their first $n-k$ coordinates. We
conclude that the codimension of $\widehat{\Sigma}_N$ in $\mathrm{Sym}_n
\times \R^n \times \R$ is always $n-k$, and consequently the codimension of
$\widehat{\Sigma}$ in $\mathrm{Sym}_n\times \mathbb{G}_{n-k,n}
\times \R^n\times \R$ is also $n-k$. \par
The set $\Sigma$ is the projection of $\widehat{\Sigma}$, and the restriction
of this projection to the subset of $(A,N,V,\lambda)$ in $\widehat{\Sigma}$
such that $p_N(V)\neq 0$ is injective. It follows that $\Sigma$ has the same
dimension as $\widehat{\Sigma}$, and this concludes the proof of the lemma. 
\end{proof}

We now use quadratic perturbations of $P$ and the $Q_i$'s.\par\medskip

Fix $(P,Q_1,\ldots,Q_k)$ in $\mathcal{U}$ and consider a perturbation
$(\widetilde{P},\widetilde{Q}_1,\ldots,\widetilde{Q}_k)$ given by
\begin{equation}\label{perturb}\left\{\begin{array}{rcl}
 \widetilde{P}(x) &=& P(x) + \frac{1}{2}\,{}^t xH x + {}^t L_0 x\\
 \widetilde{Q}_i(x) &=& Q_i(x) + {}^t L_i x + c_i, \quad
i=1,\ldots,k
\end{array}\right.,
\end{equation}
where $H\in \mathrm{Sym}_n$,
$L=(L_0,L_1,\ldots,L_k)\in (\R^n)^{k+1}$ and $c=(c_1,\ldots,c_k)\in \R^k$. We
can choose an open neighborhood $W$ of $$M=\{ x\in \overline{B}^n\mid
Q_1(x)=\ldots=Q_k(x)=0\}$$ contained in $B^n$ and an open neighborhood
$\mathcal{R}$ of the origin in $\mathrm{Sym}_n\times (\R^n)^{k+1}\times
\R^k$ such that, for every $(H,L,c)\in\mathcal{R}$, 
\begin{itemize}
 \item $(\widetilde{P},\widetilde{Q}_1,\ldots,\widetilde{Q}_k)$ defined in
(\ref{perturb}) is in $\mathcal{U}$,
 \item $\widetilde{M}=\{ y\in \overline{B}^n\mid
\widetilde{Q}_1(y)=\ldots=\widetilde{Q}_k(y)=0\}$ is contained in $W$,
\item for every $x\in W$, $\nabla \widetilde{Q}_1 (x),\ldots,\nabla
\widetilde{Q}_k (x)$ are
linearly independant and,
consequently, the matrix $\widetilde{G}(x)= (\langle \nabla \widetilde{Q}_i(x),
\nabla
\widetilde{Q}_j(x)\rangle)_{i,j=1,\ldots,k}$ is invertible.
\end{itemize}
We define
$$\Psi : W\times \mathcal{R}\times \R^k  \longrightarrow 
\mathrm{Sym}_n\times \mathbb{G}_{n-k,n}\times \R^n\times \R^k\times \R^k$$
with components $\Psi_1,\ldots,\Psi_5$ by
\begin{eqnarray*}
 \Psi_1(x,(H,L,c),u)&=& \He(\widetilde{P})(x)-\sum_{i=1}^k
u_i\,\He(\widetilde{Q}_i)(x)\;,\\
 \Psi_2(x,(H,L,c),u)&=& \big(\mathrm{span}(\nabla
\widetilde{Q}_1(x),\ldots,\nabla \widetilde{Q}_k(x))\big)^\perp\;,\\
\Psi_3(x,(H,L,c),u) &=& \nabla \widetilde{P}(x)\;,\\
\Psi_4(x,(H,L,c),u) &=& \widetilde{G}(x)\,u - (\langle\nabla \widetilde{Q}_i
(x), \nabla
\widetilde{P}(x)\rangle)_{i=1,\ldots,k}\;,\\
\Psi_5(x,(H,L,c),u) &=& (\widetilde{Q}_i(x))_{i=1,\ldots,k}
\end{eqnarray*}

\begin{lemma}\label{submersion}
 The mapping $\Psi$ is a submersion.
\end{lemma}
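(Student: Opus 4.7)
The plan is to prove surjectivity of $D\Psi$ at each point by showing that, even when $x$ is held fixed, the perturbation parameters $(H, L_0, L_1, \ldots, L_k, c)$ together with the Lagrange multiplier $u$ already provide enough independent directions to span the target tangent space. The first step is to read off the key partial derivatives from the formulas. Since $\widetilde{P}(x) = P(x) + \frac{1}{2}\,{}^t x H x + {}^t L_0 x$ has $\He(\widetilde{P}) = \He(P) + H$ and $\nabla \widetilde{P}(x) = \nabla P(x) + H x + L_0$, while $\widetilde{Q}_i = Q_i + {}^t L_i x + c_i$ leaves the Hessian unchanged, I will obtain $\partial\Psi_1/\partial H = \mathrm{id}_{\mathrm{Sym}_n}$, $\partial\Psi_3/\partial L_0 = \mathrm{id}_{\R^n}$, $\partial\Psi_5/\partial c = \mathrm{id}_{\R^k}$, and $\partial\Psi_4/\partial u = \widetilde{G}(x)$, which is invertible by the hypotheses defining $\mathcal{R}$.

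The Grassmannian component $\Psi_2$ will be handled by varying $L_1, \ldots, L_k$. Under the canonical identification of $T_N \mathbb{G}_{n-k,n}$ with $\mathrm{Hom}(N^\perp, N)$, a perturbation $\delta L_i$ of $\nabla \widetilde{Q}_i$ induces the first-order change $\nabla \widetilde{Q}_i \mapsto p_N(\delta L_i)$. Since each $\delta L_i \in \R^n$ is independent and $p_N : \R^n \to N$ is surjective, every element of $\mathrm{Hom}(N^\perp, N)$ can be attained, so $\partial\Psi_2/\partial(L_1, \ldots, L_k)$ surjects onto $T_N \mathbb{G}_{n-k,n}$.

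Finally I will combine these observations by tracking the vanishing pattern of the mixed partials: $c$ affects only $\Psi_5$; $u$ only $\Psi_1$ and $\Psi_4$; $L_i$ for $i \ge 1$ only $\Psi_2, \Psi_4, \Psi_5$; $L_0$ only $\Psi_3, \Psi_4$; and $H$ only $\Psi_1, \Psi_3, \Psi_4$. Ordering rows as $\Psi_2, \Psi_5, \Psi_1, \Psi_3, \Psi_4$ and columns as $(L_1,\ldots,L_k), c, H, L_0, u$, the corresponding submatrix of $D\Psi$ takes the block form
\[
\left(\begin{array}{ccccc} A & 0 & 0 & 0 & 0 \\ * & I & 0 & 0 & 0 \\ 0 & 0 & I & 0 & * \\ 0 & 0 & * & I & 0 \\ * & 0 & * & * & \widetilde{G} \end{array}\right),
\]
with $A$ surjective and $\widetilde{G}$ invertible. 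Elementary row operations using the identity pivots clear the starred off-diagonal entries and reduce the matrix to block triangular form with each diagonal block surjective; hence $D\Psi$ has full row rank. The main obstacle is the Grassmannian step: one must correctly identify $T_N \mathbb{G}_{n-k,n}$ and verify surjectivity of the induced map onto it; once this is set up, the rest is straightforward bookkeeping with the block pattern above.
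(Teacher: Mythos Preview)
Your proposal is correct and follows essentially the same approach as the paper: both single out the five partial derivatives $\partial\Psi_1/\partial H=\mathrm{id}$, $\partial\Psi_3/\partial L_0=\mathrm{id}$, $\partial\Psi_4/\partial u=\widetilde G(x)$, $\partial\Psi_5/\partial c=\mathrm{id}$, and $\partial\Psi_2/\partial(L_1,\ldots,L_k)$ surjective, and conclude surjectivity of $D\Psi$ from these. You supply more detail than the paper does---the explicit zero/nonzero pattern of the cross-partials, the block matrix, and the identification $T_N\mathbb G_{n-k,n}\cong\mathrm{Hom}(N^\perp,N)$---whereas the paper simply states that the Grassmannian step ``can be checked rather easily'' and leaves the assembly implicit.
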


\begin{proof}{Proof.}
The partial derivative $\partial\Psi_1/\partial H\, (x,(H,L,c),u)$ is the
identity of $\mathrm{Sym}_n$, $\partial\Psi_3/\partial L_0\, (x,(H,L,c),u)$ is
the identity of $\R^n$, $\partial\Psi_4/\partial u\, (x,(H,L,c),u)$ is an
isomorphism on $\R^k$ since $\widetilde{G}(x)$ is invertible and
$\partial\Psi_5/\partial c\, (x,(H,L,c),u)$ is the identity of $\R^k$. It
suffices to prove that the partial derivative
$$\frac{\partial\Psi_2}{\partial(L_1,\ldots,L_k)}\, (x,(H,L,c),u) $$
is
surjective. This can be checked rather easily considering the definition of
$\Psi_2$ and using $\nabla \widetilde{Q}_i(x)= \nabla Q_i(x)+ L_i$.
\end{proof}

\begin{lemma}\label{dimoneperturb}
 There exists an open dense subset $\mathcal{S}$ of $\mathcal{R}$ such that,
for every perturbation $(H,L,c)\in\mathcal{S}$, the semialgebraic set
$\theta_{\widetilde{M}}(\widetilde{P})$ is of dimension 1.
\end{lemma}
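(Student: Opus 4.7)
The plan is to pull back the semialgebraic set $\Sigma$ of Lemma \ref{Sigma} through the submersion $\Psi$ of Lemma \ref{submersion}, and then apply the generic fiber dimension theorem to the projection onto the parameter space $\mathcal{R}$.

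Explicitly, I would form
$$Z \;=\; \Psi^{-1}\bigl(\Sigma\times\{0\}\times\{0\}\bigr) \;\subset\; W\times\mathcal{R}\times\R^k,$$
where the two $\{0\}$ factors refer to the targets of $\Psi_4$ and $\Psi_5$. For each $(H,L,c)\in\mathcal{R}$, the fiber $\pi^{-1}((H,L,c))$ of the projection $\pi:Z\to\mathcal{R}$ consists of pairs $(x,u)$ with $x\in\widetilde{M}$, with $u$ equal to the Lagrange multipliers $u_i(x)$ attached to $\nabla_{\widetilde{M}}\widetilde{P}(x)$ (uniquely determined by $x$, since $\widetilde{G}(x)$ is invertible), and with $x\in\theta_{\widetilde{M}}(\widetilde{P})$ by Lemma \ref{eigenvector}. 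The projection $(x,u)\mapsto x$ is then a bijection between this fiber and $\theta_{\widetilde{M}}(\widetilde{P})$, so it suffices to show that the fiber has dimension at most $1$ for $(H,L,c)$ in an open dense subset of $\mathcal{R}$.

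For the dimension count, Lemma \ref{Sigma} gives that the target subset $\Sigma\times\{0\}\times\{0\}$ has codimension $(n-k-1)+k+k=n+k-1$ in the codomain of $\Psi$. Since $\Psi$ is a submersion, stratifying $\Sigma$ by smooth semialgebraic pieces and pulling back stratum by stratum yields that $Z$ has the same codimension $n+k-1$ in $W\times\mathcal{R}\times\R^k$; equivalently, $\dim Z=\dim\mathcal{R}+1$. The standard semialgebraic generic fiber estimate (a consequence of Hardt's triviality theorem) then says that the set of $(H,L,c)\in\mathcal{R}$ for which $\dim\pi^{-1}((H,L,c))\geq 2$ is semialgebraic of dimension at most $\dim Z-2=\dim\mathcal{R}-1$. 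Its closure in $\mathcal{R}$ is closed and nowhere dense, and $\mathcal{S}$ can be taken to be its open dense complement.

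The delicate point is the codimension-preservation step: $\Sigma$ is only semialgebraic, not a submanifold, so Lemma \ref{submersion} has to be applied on each smooth stratum of $\Sigma$ and the codimensions summed; after that, everything is routine semialgebraic bookkeeping. As in the work of D'Acunto--Kurdyka \cite{DK2}, ``dimension $1$'' here should be read as ``dimension at most $1$'', which is exactly what is required to invoke their length estimate for the thalweg.
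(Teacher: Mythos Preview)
Your argument is correct and follows the same plan as the paper's proof: identify $\theta_{\widetilde M}(\widetilde P)$ with the fiber over $(H,L,c)$ of the projection from $\Psi^{-1}(\Sigma\times\{0\}\times\{0\})$, and combine the submersivity of $\Psi$ (Lemma~\ref{submersion}) with the codimension of $\Sigma$ (Lemma~\ref{Sigma}) to get the dimension count $n+k-1$. The only variation is in the final genericity step: the paper applies the parametric transversality theorem of \cite{GP} to make each $\Psi_{H,L,c}$ transverse to a stratification of $\Sigma\times\{0\}\times\{0\}$, while you compute $\dim Z=\dim\mathcal{R}+1$ in the total space and then invoke the semialgebraic generic-fiber bound; the two routes are equivalent here.
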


\begin{proof}{Proof.}
For $(H,L,c)$ in $\mathcal{R}$, set
$$\Psi_{H,L,c} : W\times \R^k  \longrightarrow 
\mathrm{Sym}_n\times \mathbb{G}_{n-k,n}\times \R^n\times \R^k\times \R^k$$
to be the mapping defined by $\Psi_{H,L,c}(x,u)=\Psi(x,(H,L,c),u)$. Note that
$\theta_{\widetilde{M}}(\widetilde{P})$ is the projection of
$\Psi_{H,L,c}^{-1}(\Sigma\times\{0\}\times\{0\})$ on $W$.\par
Lemma \ref{submersion} allows us to apply the
transversality theorem with parameters \cite{GP}, giving an open dense
subset
$\mathcal{S}$ of $\mathcal{R}$ such that  $\Psi_{H,L,c}$ is transverse to
a finite semialgebraic stratification of $(\Sigma\times\{0\}\times\{0\}$ for
every $(H,L,c)$ in $\mathcal{S}$. By
Lemma \ref{Sigma}, it follows that
$\Psi_{H,L,c}^{-1}(\Sigma\times\{0\}\times\{0\})$ is of codimension
$n-k-1+k+k=n+k-1$ in $W\times \R^k$, that is to say of dimension 1. Hence, its
projection $\theta_{\widetilde{M}}(\widetilde{P})$ is of dimension 1.
\end{proof}

Lemma \ref{dimoneperturb} completes the proof of Proposition \ref{dimone}.
Remark that the fact that the $Q_i$ are quadratic polynomials played no role in
the proof. Hence, Proposition \ref{dimone} actually holds for polynomials
$Q_1,\ldots, Q_k$ of any degree $\geq 2$ and for any quadratic Morse function
$P$ satisfying conditions (i) and (ii): up to an arbitrary small perturbation,
one can assume that $\theta_M(P)$ is of dimension 1.\par\medskip

We now assume that $\theta_M(P)$ is of dimension 1. 
We can compute its length using Cauchy-Crofton formula, as the integral,
on the Grassmannian $\mathbb{G}^{\mathrm{aff}}_{n-1,n}$ of affine hyperplanes
$h$ in $\R^n$ (w.r.t. a conveniently normalized
measure $\mu$), of the number of intersection points $\sharp(h \cap
\theta_M(P))$:
$$ \mathrm{length}(\theta_M(P)) =\int_
{\mathbb{G}^{\mathrm{aff}}_{n-1,n}} \sharp(h \cap \theta_M(P))\; d\mu(h)\;.$$
If we have a bound $I$ on the number of intersection points (for almost all
$h$), then we obtain 
$$ \mathrm{length}(\theta_M(P)) \leq I\, \nu(n)\;,$$
where $\nu(n)=
2\,\Gamma(\frac{1}{2})\,\Gamma(\frac{n+1}{2})\,\Gamma(\frac{n}{2})^{-1}$ is the
$\mu$-volume of the set of affine hyperplanes having a non-empty intersection
with the unit ball.\par\medskip

 The bound $I$ on the number of intersection points can
be evaluated since
$\theta_M(P)$ is a
semialgebraic set, whereas the trajectories of the normalized gradient are not.
Instead of applying directly Bézout's theorem to estimate the number of points
in $h \cap \theta_M(P)$ as done in \cite{DK2}, which would give a bound
exponential in $n$, we are going to use the methods
introduced by Barvinok \cite{Bk} in order to exploit the fact that we work with
quadratic polynomials.

\subsection{Barvinok's way: linear systems of small corank}

We proceed to prove:

\begin{proposition}\label{nbinters} 
There is an open dense subset $\mathcal{V}$ of
$\mathcal{U}\subset (\R_2[X_1,\ldots,X_n])^{k+1}$ and a polynomial $p_k(n)$ of
degree $O(k)$ such that, for every $(P,Q_1,\ldots,Q_k)\in \mathcal{V}$
and every affine hyperplane $h$ in $\R^n$, the number of connected
components of $h \cap \theta_M(P)$ is $\leq p_k(n)$.
\end{proposition}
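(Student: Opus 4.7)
The plan is to describe $h\cap\theta_M(P)$ as the zero locus of an explicit polynomial system in $n+2k+1$ unknowns $(x,u,\lambda,\mu)\in \R^n\times\R^k\times\R\times\R^k$, exploit the linearity in $x$ afforded by the quadratic nature of $P$ and the $Q_i$, and apply the Barvinok-type strategy of eliminating $x$ from the linear part of the system before invoking B\'ezout on what remains.

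First I would write down the system. By Lemma~\ref{eigenvector} together with the identities (\ref{Lagrange}) and (\ref{gradnormgradMP}), a point $x\in h\cap\theta_M(P)$ is characterized by the existence of $(u,\lambda,\mu)$ such that the $n$ Lagrange equations $2(\He(P)-\sum_i u_i\He(Q_i))\nabla_M P(x)=\lambda\nabla P(x)+\sum_i \mu_i\nabla Q_i(x)$, the $k$ defining equations (\ref{Gu}) for $u$, the $k$ equations $Q_i(x)=0$, and the single affine equation for $h$ all hold. The crucial structural point, which genuinely uses the assumption that the $Q_i$ and $P$ are quadratic, is that the gradients $\nabla P,\nabla Q_i$ are affine in $x$ and the Hessians are constant; hence the $n$ Lagrange equations together with the hyperplane equation form a linear system $A(u,\lambda,\mu)\,x=b(u,\lambda,\mu)$ of size $(n+1)\times n$ whose entries are polynomial of small degree in the $2k+1$ parameters $(u,\lambda,\mu)$, while the remaining $2k$ equations are quadratic in $x$.

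The Barvinok step is then to stratify by $r=\mathrm{corank}\,A(u,\lambda,\mu)$. On the open stratum $r=0$, the overdetermined linear system produces one polynomial compatibility equation on $(u,\lambda,\mu)$ and determines $x$ as a ratio of bounded-degree polynomials in the parameters (via Cramer's rule applied to an $n\times n$ minor). Substituting this rational $x$ into the $2k$ remaining quadratic equations and clearing denominators yields a system of $2k+1$ polynomial equations in $2k+1$ variables, of total degree $O(k)$, and B\'ezout bounds its isolated solutions by a polynomial $p_k(n)$ in $n$ of degree $O(k)$. For strata where $r\geq 1$, I would parametrize the $r$-dimensional fibre of the linear system by $r$ extra affine coordinates and apply the same count in $2k+1+r$ variables.

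The main obstacle, already flagged in the introduction, is to keep $r$ bounded by $O(k)$ uniformly in the parameters: in Barvinok's original setting $A$ depended linearly on the parameters and rank-drop loci had predictable codimension, but here $A$ is quadratic in $u$. To handle this I would argue along the lines of Lemmas~\ref{submersion}--\ref{dimoneperturb}: introduce a generic quadratic perturbation of $(P,Q_1,\ldots,Q_k)$ and use the transversality theorem with parameters to obtain an open dense subset $\mathcal{V}\subset\mathcal{U}$ on which the rank-drop loci of $A$ have codimension growing with the corank, so that only finitely many strata with $r=O(k)$ intersect the variety of interest. Summing the B\'ezout contributions over those strata yields the advertised bound $p_k(n)$.
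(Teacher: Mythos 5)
Your overall plan coincides with the paper's: set up the system of $n+2k+1$ equations in $(x,\lambda,\mu,u)$, use the linearity in $x$ to eliminate $x$ over strata of the parameter space indexed by the corank of the coefficient matrix, and finish with a B\'ezout-type bound on the small residual system. Including the hyperplane equation in the linear block rather than keeping it among the residual equations, as the paper does, is a harmless variation. The stratification by $\delta=(s,I,J)$, Cramer's rule, and the resulting polynomial bound $p_k(n)$ of degree $O(k)$ are all recovered correctly in spirit.

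However, there is a genuine gap in the step you yourself flag as the main obstacle, namely bounding the corank of $\Phi(\lambda,\mu,u,H)=2(H_0-\sum u_iH_i)^2-\lambda H_0-\sum\mu_iH_i$ uniformly in the parameters. You propose to ``argue along the lines of Lemmas~\ref{submersion}--\ref{dimoneperturb},'' i.e.\ to perturb $H$ and invoke the transversality theorem with parameters. But the standard parametric transversality theorem requires the joint map $\Phi\colon E\times(\mathrm{Sym}_n)^{k+1}\to\mathrm{Sym}_n$ to be a submersion, and here it is \emph{not}: because of the $(H_0-\sum u_iH_i)^2$ term, the partial derivative in $H_0$ is $S\mapsto SA+AS$ with $A=2(H_0-\sum u_iH_i)-\tfrac{\lambda}{2}I_n$, which drops rank whenever two eigenvalues of $A$ are opposite. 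The paper explicitly notes this (``We cannot assume that the mapping $\Phi$ \dots is submersive'') and has to go around it: Lemma~\ref{codimcorank} analyzes the spectrum of $S\mapsto SA+AS$ to control the corank of $d\Phi$ by $k+1$ generically in $H$ (Lemma~\ref{corankdiff}), and then the new Lemma~\ref{gentransver} supplies a generalized codimension estimate for preimages under semialgebraic maps whose \emph{differential} has bounded corank. Only the combination of these three lemmas yields the uniform corank bound $c(k)=O(\sqrt{k})$ for $\Phi$ itself (Proposition~\ref{smallcorank}), which is what makes the stratification finite with $n^{O(k)}$ pieces. Your sketch silently assumes that a direct transversality argument gives this, which it does not; without some replacement for Lemmas~\ref{codimcorank}--\ref{gentransver}, the corank bound --- and hence the whole count --- is unsubstantiated.
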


The intersection $h\cap\theta_M(P)$ is the set of $x\in \overline{B}^n$ such
that
there exist
$\lambda\in \R$ and $\mu={}^t(\mu_1,\ldots,\mu_k)\in\R^k$ satisfying the
following
system of equations:
\begin{eqnarray}
\nabla(\Vert\nabla_M P\Vert^2)(x) & =&  \lambda \nabla P(x) + \sum_{i=1}^k
\mu_i\nabla Q_i(x)\;,\label{Lagrangebis}\\
Q_i(x)&=&0\qquad (i=1,\ldots,k)\;,\\
{}^t a\,x&=&b\;,
\end{eqnarray}
where the last equation is the equation of the hyperplane $h$.

We rewrite the quadratic polynomials $P(x), Q_1(x),\ldots,Q_k(x)$ as 
\begin{eqnarray*}
 P(x)&=& \frac{1}{2} {}^t x\,H_0\,x + {}^tL_0\,x \\
 Q_i(x) &=&  \frac{1}{2} {}^t x\,H_i\,x + {}^tL_i\,x + c_i \quad
(i=1,\ldots,k)\;,
\end{eqnarray*}
where $H_i\in\mathrm{Sym}_n$, $L_i\in \R^n$ and $c_i\in \R$. Then equation
(\ref{Lagrangebis}) becomes
\begin{eqnarray}
 \lefteqn{2 \left(H_0- \sum_{i=1}^k u_i(x)\, H_i\right)\left( H_0\,x+ L_0 -
\sum_{i=1}^k
u_i(x)\,(H_i\,x+L_i)\right)}\hspace*{3cm}\nonumber\\
&=&  
\lambda\, ( H_0\,x+ L_0) +\sum_{i=1}^k \mu_i\,(H_i\,x+
L_i)\;.\label{Lagrangeter}
\end{eqnarray}
Equation (\ref{Lagrangeter}) fails to
be a linear system in $x$ because of the $u_i(x)$. In order to make it linear in
$x$, we introduce $k$ new variables $u={}^t(u_1,\ldots,u_k)\in\R^k$  and
add
the
$k$ equations (\ref{Gu}) which relate $u$ and $x$. So finally
$h\cap\theta_M(P)$ is
described as the projection on $\R^n$ of the set of solutions
$(x,\lambda,\mu,u)\in
\R^n\times \R\times \R^k\times \R^k$ of a system of $n+2k+1$ equations  where
the
first $n$ are the following linear system in $x$:
\begin{equation}\label{systlin}
 \left[2\,S(u)^2 -\lambda\,H_0 -\sum_{i=1}^k
\mu_i\,H_i\right]\,x = S(u)\left(\sum_{i=1}^k u_iL_i-L_0\right) + \lambda L_0
+\sum_{i=1}^k \mu_iL_i\;,
\end{equation}
where $S(u)$ is the symmetric matrix $\left(H_0- \sum_{i=1}^k u_i\,H_i\right)$.
The matrix of the system (\ref{systlin}) depends on the parameters
$(\lambda,\mu,u)$. 
\par\medskip

The main idea in Barvinok's paper is to solve the linear system for the (many)
variables $x$ as rational functions of the parameters and possibly (few) free
variables among $x$, using Cramer's formula. Substituting these expressions for
$x$ in the remaining non linear equations reduces the total system to a system
with few equations and few variables. The number of free variables among $x$ is
equal to the corank of the matrix. Hence, following Barvinok, our first aim is
to show that, for generic $(P,
Q_1,\ldots,Q_k)$, the corank of this matrix is small for all parameters
$(\lambda,\mu,u)$. However, the situation is more complicated here than in the
paper \cite{Bk}, because of the presence of $S(u)^2$.\par\medskip

For $(\lambda,\mu,u)\in \R\times \R^k\times \R^k = E$ and
$H=(H_0,H_1,\ldots,H_k)\in (\mathrm{Sym}_n)^{k+1}$, set
\begin{equation}
 \Phi(\lambda,\mu,u,H) = 2\,\left(H_0 - \sum_{i=1}^k u_iH_i\right)^2- \lambda
H_0 - \sum_{i=1}^k \mu_i H_i\;. 
\end{equation}

\begin{proposition}\label{smallcorank}
 There exists $c(k)=O(\sqrt{k})\in \mathbb{N}$ and a dense open subset $\Omega
\subset (\mathrm{Sym}_n)^{k+1}$ such that, for every $H\in \Omega$ and every
$(\lambda,\mu,u)\in E$, the corank of the matrix $\Phi(\lambda,\mu,u,H)$ is
bounded by $c(k)$.
\end{proposition}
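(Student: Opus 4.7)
The plan is to adapt Barvinok's strategy, exploiting the classical codimension bound on the corank locus: the semialgebraic set $Z_r = \{A \in \mathrm{Sym}_n : \mathrm{corank}(A) \geq r\}$ is a closed subset of $\mathrm{Sym}_n$ of codimension $\binom{r+1}{2}$, by a standard stratification of symmetric determinantal varieties. I define $c(k)$ as the smallest integer with $\binom{c(k)+2}{2} > 2k+1 = \dim E$, which gives $c(k) = O(\sqrt{k})$. The proposition then reduces to exhibiting a dense open $\Omega \subset (\mathrm{Sym}_n)^{k+1}$ such that for every $H \in \Omega$, the map $(\lambda, \mu, u) \mapsto \Phi(\lambda, \mu, u, H)$ avoids $Z_{c(k)+1}$.

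For this I would work with the universal polynomial map $G : E \times (\mathrm{Sym}_n)^{k+1} \to \mathrm{Sym}_n$ defined by $G(\lambda, \mu, u, H) = \Phi(\lambda, \mu, u, H)$ and its incidence variety $\Sigma = G^{-1}(Z_{c(k)+1})$. The central estimate to establish is
\[
\dim \Sigma \leq \dim(E \times (\mathrm{Sym}_n)^{k+1}) - \binom{c(k)+2}{2}.
\]
By the choice of $c(k)$ this yields $\dim \Sigma < \dim((\mathrm{Sym}_n)^{k+1})$, so the semialgebraic projection $\pi(\Sigma)$ to $(\mathrm{Sym}_n)^{k+1}$ has strictly smaller dimension, and $\Omega := (\mathrm{Sym}_n)^{k+1} \setminus \overline{\pi(\Sigma)}$ is the desired dense open set.

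The key computational input is that $G$ is a submersion at the explicit point $(\lambda, \mu, u, H^*)$ with $H^* = (\alpha I_n, 0, \ldots, 0)$ and $\alpha$ chosen so that $4\alpha \neq \lambda$. There one has $S = H_0 - \sum_i u_i H_i = \alpha I_n$, and
\[
\frac{\partial G}{\partial H_0}[\dot H_0] = 2(\dot H_0 \cdot \alpha I_n + \alpha I_n \cdot \dot H_0) - \lambda \dot H_0 = (4\alpha - \lambda)\,\dot H_0
\]
is already an isomorphism of $\mathrm{Sym}_n$. Hence the submersion locus $U$ of $G$ is a non-empty, and therefore Zariski dense, open subset; on $U$, the transversality of $G$ to each smooth corank-$r$ stratum of $Z_{c(k)+1}$ (for $r \geq c(k)+1$) gives $\dim(\Sigma \cap U) \leq \dim(E \times (\mathrm{Sym}_n)^{k+1}) - \binom{c(k)+2}{2}$.

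The hard part will be controlling the non-submersive remainder $\Sigma \setminus U$: a priori the non-submersion locus of $G$ can have dimension up to $\dim(E \times (\mathrm{Sym}_n)^{k+1}) - 1$, which is not small enough on its own. I would handle this by a Thom--Boardman style stratification of $E \times (\mathrm{Sym}_n)^{k+1}$ according to the rank of $dG$ and iterating the transversality argument stratum by stratum: on the rank-$s$ locus the image of $G$ has dimension at most $s$, so the preimage of $Z_{c(k)+1}$ is again cut out in codimension $\binom{c(k)+2}{2}$. The quadratic dependence of $\Phi$ on $u$---the point where the situation departs from Barvinok's linear setting---enters only through the explicit form of $\partial G/\partial H_0[\dot H_0] = 2(\dot H_0 S + S \dot H_0) - \lambda \dot H_0$ and its analogues in the $H_i$ and $u_j$ directions, all of which admit a rank-stratum analysis of the same flavor as above.
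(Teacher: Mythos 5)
Your overall plan is the right one: bound the dimension of $\Phi^{-1}\bigl(Z_{c(k)+1}\bigr)$, project to $(\mathrm{Sym}_n)^{k+1}$, and take $\Omega$ to be the complement of the closure of the projection. Your computation that $\partial\Phi/\partial H_0$ is the Sylvester-type operator $\dot H_0 \mapsto 2(\dot H_0 S + S\dot H_0)-\lambda\dot H_0$ (with $S=H_0-\sum u_iH_i$) is also exactly the paper's starting point. But there is a genuine gap where you acknowledge ``the hard part.''

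The claim that ``on the rank-$s$ locus the image of $G$ has dimension at most $s$, so the preimage of $Z_{c(k)+1}$ is again cut out in codimension $\binom{c(k)+2}{2}$'' is a non sequitur. Small image dimension gives you no transversality at all: the image of $G$ restricted to a non-submersive stratum could lie \emph{entirely} inside $Z_{c(k)+1}$, in which case the whole stratum is in $\Sigma$ and contributes no codimension whatsoever. The paper avoids this trap by two ingredients you are missing. First, instead of merely exhibiting one submersive point, it proves (Lemma~\ref{corankdiff}) that for $H$ in a dense open set, the corank of $d\Phi$ is \emph{uniformly} bounded by $k+1$ for \emph{all} $(\lambda,\mu,u)\in E$; the key is a dimension count (Lemma~\ref{codimcorank}) on the locus of $A\in\mathrm{Sym}_n$ where the operator $m_A:S\mapsto SA+AS$ has corank $\geq d$, carried out via the spectrum $\{a_i+a_j\}$ of $m_A$ in terms of the eigenvalues of $A$ and the conjugacy-class stratification of $\mathrm{Sym}_n$. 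Second, it uses a ``transversality with defect'' lemma (Lemma~\ref{gentransver}): a semialgebraic map of corank everywhere $\leq c$ pulls back a codimension-$b$ set to a set of codimension $\geq b-c$. Combining these, one needs $c(c+1)/2 > (k+1) + (2k+1) = 3k+2$, which is why the paper's $c(k)$ is a bit larger than yours; your choice $\binom{c(k)+2}{2}>2k+1$ only accounts for the projection loss $\dim E$ and ignores the defect $k+1$. Without a uniform bound on the corank of $d\Phi$, the Thom--Boardman stratification by rank of $dG$ does not by itself yield the codimension estimate, and your argument does not close.
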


We cannot assume that the mapping $\Phi : E\times
(\mathrm{Sym}_n)^{k+1}\to \mathrm{Sym}_n$ is submersive. But we can get close
to that.

\begin{lemma}\label{corankdiff} 
 There exists an open dense subset $\Omega'
\subset (\mathrm{Sym}_n)^{k+1}$ such that, for every $H\in \Omega'$ and every
$(\lambda,\mu,u)\in E$, the corank of the differential
$d_{(\lambda,\mu,u,H)}\Phi$ is bounded by $k+1$.
\end{lemma}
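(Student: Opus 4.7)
The plan is to upper-bound the corank of $d\Phi$ by that of the partial differential $d_H\Phi$ (differentiating only in the $H$-directions), then to reduce this in turn to the kernel of a self-adjoint operator on $\mathrm{Sym}_n$ and conclude by a codimension count.

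First, setting $S(u) = H_0 - \sum_{i=1}^k u_i H_i$, $\mathcal{M}(T) = TS(u) + S(u)T$ and $\mathcal{L} = 2\mathcal{M} - \lambda\,\mathrm{Id}$, differentiation of $\Phi$ with respect to each $H_i$ gives
$$d_H\Phi\,(T_0, T_1, \ldots, T_k) \;=\; \mathcal{L}\Big(T_0 - \sum_{i=1}^k u_i T_i\Big) \;-\; \sum_{i=1}^k (\mu_i + \lambda u_i)\,T_i.$$
If $\mu_i + \lambda u_i \neq 0$ for some $i$, setting $T_j = 0$ for $j\notin\{0,i\}$ and $T_0 = u_i T_i$ kills the first summand, so the image contains every multiple of $T_i$ and $d_H\Phi$ is surjective. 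Thus the corank can only be nonzero on the affine subspace $B = \{\mu + \lambda u = 0\}\subset E$; there the image of $d_H\Phi$ equals the image of $\mathcal{L}$, and since $\mathcal{L}$ is self-adjoint for the Frobenius inner product, $\mathrm{corank}(d_H\Phi) = \dim \ker \mathcal{L}$. In particular $\mathrm{corank}(d\Phi) \leq \dim \ker \mathcal{L}$.

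Second, in an orthonormal eigenbasis of $S(u)$ with eigenvalues $\sigma_1,\ldots,\sigma_n$, the operator $\mathcal{L}$ acts diagonally on $\mathrm{Sym}_n$ with eigenvalues $2(\sigma_i+\sigma_j)-\lambda$ for $i \leq j$, so $\dim \ker \mathcal{L}$ equals the number of index pairs $(i,j)$, $i\leq j$, satisfying $\sigma_i+\sigma_j = \lambda/2$. I would then show that the semialgebraic locus
$$\Theta_r = \big\{(S,\lambda)\in \mathrm{Sym}_n\times\R\,:\,\dim \ker(2\mathcal{M}_S - \lambda\,\mathrm{Id})\geq r\big\}$$
has codimension at least $r$ in $\mathrm{Sym}_n\times\R$. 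The tight case is ``disjoint edges'': choosing $r$ pairwise disjoint index pairs $(i_s,j_s)$, the $r$ equations $\sigma_{i_s}+\sigma_{j_s}=\lambda/2$ are independent on the open simple-spectrum stratum and give a codim-$r$ subset; any configuration in which the pairs share a vertex or form a cycle forces eigenvalue coincidences of $S$, each of codimension $2$ in $\mathrm{Sym}_n$, and therefore sits in a stratum of codimension strictly more than $r$.

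To finish, the map $\pi:\R^{k+1}\times(\mathrm{Sym}_n)^{k+1}\to \mathrm{Sym}_n\times \R$, $(\lambda,u,H)\mapsto (S(u),\lambda)$, is a linear submersion (varying $H_0$ produces arbitrary $\delta S$). Hence $\pi^{-1}(\Theta_{k+2})$ has codimension $\geq k+2$ in the source and thus dimension at most $(k+1)\binom{n+1}{2}-1$; its projection onto $(\mathrm{Sym}_n)^{k+1}$ therefore has codimension $\geq 1$, so is contained in a proper semialgebraic subset. Taking $\Omega'$ to be its complement, we obtain $\dim \ker \mathcal{L}(\lambda,u,H) \leq k+1$ for every $H\in\Omega'$ and every $(\lambda,u)\in\R^{k+1}$, hence $\mathrm{corank}(d\Phi)\leq k+1$ throughout. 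The main obstacle is the codimension estimate on $\Theta_r$: the case analysis separating simple-spectrum configurations (where independence of the equations is transparent) from higher-codimension multiplicity strata requires care, though once in hand the rest of the argument is a formal submersion-and-projection dimension count.
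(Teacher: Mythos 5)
Your strategy is essentially the paper's. The paper isolates the single partial derivative $\partial\Phi/\partial H_0 : T\mapsto TA+AT$ with $A=2S(u)-\tfrac{\lambda}{2}I_n$, which is exactly your operator $\mathcal{L}$ (since $TA+AT=2(TS+ST)-\lambda T$); so the reduction to $\dim\ker\mathcal{L}$, the identification of its spectrum as $\{2(\sigma_i+\sigma_j)-\lambda\}_{i\leq j}$, and the final submersion-and-projection dimension count (the paper phrases it as parametric transversality) are all the same. Your extra observation that the full $d_H\Phi$ is surjective unless $\mu+\lambda u=0$ is correct and slightly sharper, but it is not needed, because only an upper bound on $\mathrm{corank}(d\Phi)$ is required. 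One minor point: the complement of the projection of $\pi^{-1}(\Theta_{k+2})$ onto $(\mathrm{Sym}_n)^{k+1}$ need not be open; take the complement of its \emph{closure}, which still has codimension $\geq 1$, to get the open dense $\Omega'$.

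The genuine gap is exactly the step you flag: the codimension estimate for $\Theta_r$, which is the paper's Lemma~\ref{codimcorank}. Your heuristic (disjoint pairs give $r$ transverse conditions on the simple-spectrum stratum; configurations sharing indices force eigenvalue collisions, each of codimension $2$) has the right intuition, but it is not a proof. Merely observing that each collision costs codimension $2$ does not settle the bookkeeping when many pairs share indices: the number $r$ of vanishing eigenvalues $2(\sigma_i+\sigma_j)-\lambda$ can grow quadratically with the multiplicities, so one has to check stratum by stratum that the codimension of the multiplicity stratum in $\mathrm{Sym}_n$ keeps pace with the corank of $\mathcal{L}$ there. The paper closes this by an explicit computation: it fixes the multiplicity pattern $(\alpha;\beta_1,\gamma_1;\dots;\beta_\ell,\gamma_\ell)$ of the eigenvalues of $A$ grouped as zeros and $\pm b_m$, computes the dimension of the $O(n)$-orbit of the corresponding diagonal matrix, and verifies directly that the resulting codimension $\tfrac{n(n+1)}{2}-o-\ell$ dominates the corank of $m_A$ on that stratum. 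That explicit stratum-by-stratum inequality is the content your sketch would need to supply.
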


\begin{proof}{Proof.}
 We consider the partial derivative $\displaystyle\frac{\partial
\Phi}{\partial H_0}(\lambda,\mu,u,H) : \mathrm{Sym}_n\to \mathrm{Sym}_n$.
Introducing the symmetric matrix $A(\lambda,u,H)= 2(H_0 - \sum_{i=1}^k
u_iH_i)-\frac{\lambda}{2}I_n$ (where $I_n$ is the identity matrix), we obtain
\begin{equation}
 \frac{\partial \Phi}{\partial H_0}(\lambda,\mu,u,H) : S \mapsto
S\,A(\lambda,u,H)+A(\lambda,u,H)\,S\;.
\end{equation}
Denote by $m_A$ the endomorphism $S\mapsto SA+AS$ of $\mathrm{Sym}_n$. We use
the following result.

\begin{lemma}\label{codimcorank}
 The set of $A\in\mathrm{Sym}_n$ such that the corank of $m_A$ is $\geq d$ has
codimension $\geq d$ in $\mathrm{Sym}_n$.
\end{lemma}

\begin{proof}{Proof.}
If $a_1,\ldots,a_n$ are the eigenvalues of $A$, then the
eigenvalues of $m_A$ are $a_i+a_j$ for $1\leq i\leq j\leq n$ (this can easily be
checked by reduction to the case when $A$ is diagonal). Hence the corank of
$m_A$ is the number of sums of eigenvalues $a_i+a_j$ equal to $0$.\par
Consider diagonal matrices of the type
$$D=\mathrm{diag}(\underbrace{0,\ldots,0}_\alpha, \underbrace{b_1,
\ldots,b_1}_{\beta_1}, \underbrace{-b_1,\ldots,-b_1}_{\gamma_1},\ldots,
\underbrace{b_\ell,\ldots,b_\ell}_{\beta_\ell}
,\underbrace{-b_\ell,\ldots,-b_\ell}_{\gamma_\ell})\;,$$
where $0<b_1<\ldots<b_\ell$, and $\alpha,\beta_i,\gamma_i$ are fixed nonnegative
integers which sum up to $n$ and such that $\beta_i+\gamma_i>0$. They form a
semialgebraic set of dimension $\ell$. The orbit of
such a $D$ for the action by conjugation of the orthogonal group has dimension
$o=\left(n(n-1)-\alpha(\alpha-1)- \sum_{i=1}^\ell
(\beta_i(\beta_i-1)+\gamma_i(\gamma_i-1))\right)/2$. A simple computation shows
that the codimension of the set of symmetric matrices similar to a matrix of
type $D$, which is $n(n+1)/2-o-\ell$, is greater than or equal to the corank
of $m_D$, which is $\alpha(\alpha-1)/2 +\sum_{i=1}^\ell \beta_i\gamma_i$.
Lemma \ref{codimcorank} follows.
\end{proof}

We return to the proof of Lemma \ref{corankdiff}. The mapping
$$(\lambda, u,H) \longmapsto A(\lambda,u,H)=  2(H_0 - \sum_{i=1}^k
u_iH_i)-\frac{\lambda}{2}I_n $$
is clearly submersive. Hence, by the theorem of transversality with parameters
\cite{GP}, there is an open dense subset $\Omega'\subset
(\mathrm{Sym}_n)^{k+1}$ such that, for every $H\in \Omega'$, the mapping
$(\lambda,u)\mapsto A(\lambda,u,H)$ from $\R\times \R^k$ to $\mathrm{Sym_n}$ is
transverse to a finite semialgebraic stratification of the subset of symmetric
matrices $A$ such that $m_A$
has corank $> k+1$. Since this subset has codimension $> k+1$ by Lemma
\ref{codimcorank}, this means that it has empty intersection with the image of 
$(\lambda,u)\mapsto A(\lambda,u,H)$. Thus the proof of Lemma \ref{corankdiff}
is completed.
\end{proof}

In order to prove Proposition \ref{smallcorank}, we shall use a generalized
transversality lemma which has a very simple proof in the semialgebraic context.

\begin{lemma}\label{gentransver}
 Let $Z\subset \R^m$ and $F\subset \R^n$ be semialgebraic manifolds and $f:Z\to
F$, a smooth semialgebraic mapping such that the corank of $d_xf$ is $\leq c$
for every $x\in Z$. Let $B$ be a semialgebraic subset of codimension $b$ of
$F$. Then the codimension of $f^{-1}(B)$ in $Z$ is $\geq b-c$. 
\end{lemma}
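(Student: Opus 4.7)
The plan is to reduce the statement to a purely local computation on smooth strata, and then use the fact that the pullback of independent covectors along a map with small corank still gives many independent covectors.

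First, I would invoke the existence of a finite semialgebraic stratification of $B$ into smooth semialgebraic submanifolds, each of codimension $\geq b$ in $F$. Since $f^{-1}(B)$ is then the (finite) union of the preimages of these strata, it suffices to prove the codimension bound under the extra assumption that $B$ itself is a smooth semialgebraic submanifold of $F$ of codimension $b' \geq b$.

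Next I would argue locally, pointwise on $f^{-1}(B)$. Pick $x\in f^{-1}(B)$ and set $y=f(x)$. Since $B$ is a smooth submanifold of codimension $b'$ near $y$, there exist smooth semialgebraic functions $g_1,\ldots,g_{b'}$ defined in an open neighborhood $V$ of $y$ such that $B\cap V=\{g_1=\cdots=g_{b'}=0\}$ and such that the covectors $d_yg_1,\ldots,d_yg_{b'}$ are linearly independent in $(T_yF)^*$. The key observation is that the transpose $(d_xf)^{*}\colon (T_yF)^{*}\to (T_xZ)^{*}$ has kernel equal to the annihilator of $\mathrm{im}(d_xf)$, whose dimension is exactly the corank of $d_xf$, hence at most $c$. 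Consequently, among the $b'$ independent covectors $d_yg_i$, the pullbacks $d_x(g_i\circ f)=(d_xf)^{*}(d_yg_i)$ span a subspace of $(T_xZ)^{*}$ of dimension at least $b'-c\geq b-c$.

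Hence we can extract indices $i_1,\ldots,i_{b-c}$ such that $d_x(g_{i_1}\circ f),\ldots,d_x(g_{i_{b-c}}\circ f)$ are linearly independent. By the implicit function theorem, the zero set
\[
Z_x = \{\,z\in Z \text{ near } x \;:\; g_{i_1}(f(z))=\cdots=g_{i_{b-c}}(f(z))=0\,\}
\]
is a smooth submanifold of codimension $b-c$ in $Z$ in a neighborhood of $x$, and it obviously contains $f^{-1}(B)$ near $x$. Thus the local dimension of $f^{-1}(B)$ at $x$ is at most $\dim Z-(b-c)$. Since this holds at every point $x\in f^{-1}(B)$, and $f^{-1}(B)$ is semialgebraic (so has well-defined dimension), we conclude that $\mathrm{codim}_Z f^{-1}(B)\geq b-c$, as desired.

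The only mildly delicate point is the first step — producing the stratification of $B$ by smooth semialgebraic submanifolds of codimension $\geq b$ — but this is standard semialgebraic geometry. Everything after that is elementary linear algebra combined with the implicit function theorem.
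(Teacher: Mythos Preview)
Your proof is correct, but the argument is genuinely different from the paper's. The paper first replaces $f$ by its graph so that $f$ becomes the restriction of a coordinate projection $\R^{n+p}\to\R^n$, and then takes a single cylindrical algebraic decomposition adapted to $F$, $Z$ and $B$; on each cell of $Z$ the map is literally a projection $(0,1)^{d}\to(0,1)^{e}$, and the inequality $d-e\leq\dim\ker(d_xf)\leq\dim Z-\dim F+c$ gives $\mathrm{codim}_Z(\text{cell})\geq\mathrm{codim}_F(\text{image cell})-c$ directly. Your route instead stratifies only $B$, works pointwise with local defining functions $g_1,\ldots,g_{b'}$, and observes that the dual map $(d_xf)^*$ has kernel of dimension at most $c$, so at least $b-c$ of the pulled-back differentials stay independent; the implicit function theorem then traps $f^{-1}(B)$ locally in a codimension $b-c$ submanifold. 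Both arguments are short; the CAD argument is perhaps the most ``native'' semialgebraic proof, while your cotangent/IFT argument is closer to standard differential topology and transfers verbatim to any o-minimal structure. One small remark: you do not actually need the $g_i$ to be semialgebraic, since you only use them to bound the \emph{local} dimension of the semialgebraic set $f^{-1}(B)$ at each point.
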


\begin{proof}{Proof.}
Replacing $f$ with its graph, we may assume $Z\subset \R^{n+p}$ and $f$ is the
restriction to $Z$ of the projection $\R^{n+p}\to \R^n$. Taking a
cylindrical algebraic decomposition adapted to $F$, $Z$ and $B$ (\cite[Section
5.1]{BPR} or \cite[Section 2.3]{C}) we obtain that
$F$ (resp. $Z$) is the disjoint union of a finite number of cells $S_i$ (resp.
$T_j$) each diffeomorphic to $(0,1)^{d_i}$ (resp. $(0,1)^{e_j}$) and,
for each $S_i$ there is a $T_j$ such that $f(S_i)=T_j$ and $f|_{S_i}$ correspond
via the diffeomorphisms to the projection of  $(0,1)^{d_i}$ on the first
$e_j$ coordinates. We have, for $x\in S_i$,
$$d_i-e_j\leq\dim(\ker(d_x f) \leq \dim Z -\dim F +c\,$$
hence $\mathrm{codim}_Z S_i \geq \mathrm{codim}_F T_j - c$. Since
$f^{-1}(B)$ is the union of those $S_i$ such that $T_j=f(S_i)\subset B$, the
conclusion of the lemma follows.
\end{proof}

We can now complete the 

\begin{proof}{Proof of Proposition \ref{smallcorank}.}
It is known that the semialgebraic subset $B(c)$ of symmetric matrices of
corank $\geq c$ is of codimension $c(c+1)/2$ in $\mathrm{Sym}_n$ (cf. for
instance corollary to Lemma 2 in \cite{Ag}). Using Lemmas \ref{corankdiff} and
\ref{gentransver}, we obtain $\Omega'$ open and dense in
$(\mathrm{Sym}_n)^{k+1}$ such that the codimension of $\Phi^{-1}(B(c))$ in
$E\times \Omega'$ is $\geq c(c+1)/2- (k+1)$. Hence, the
projection $p_{\Omega'}(\Phi^{-1}(B(c)))$ has codimension $\geq  c(c+1)/2-
(3k+2)$ in $\Omega'$. If we take $c$ such that $c(c+1)/2 > 3k+2$, for instance 
$$c=c(k) = \left\lfloor \frac{1+\sqrt{17+24k}}{2} \right\rfloor\;,$$
the complement of $p_{\Omega'}(\Phi^{-1}(B(c)))$ in $\Omega'$ contain a dense
open subset $\Omega$ such that, for every $H\in \Omega$ and every
$(\lambda,\mu,u)\in
E$, the corank of $\Phi(\lambda,\mu,u,H)$ is $<c$. 
\end{proof}

Once we have this result, we follow closely Barvinok in the

\begin{proof}{Proof of Proposition \ref{nbinters}.}
Let $\mathcal{V} \subset \mathcal{U}$ be the open dense subset of
$(P,Q_1,\ldots,Q_k)$ such that the corresponding $H$ belongs to $\Omega$ as in
Proposition \ref{smallcorank}. Assume in the following that
$(P,Q_1,\ldots,Q_k)\in \mathcal{V}$.

The intersection $h\cap \theta_M(P)$ is the projection on the space $\R^n$ of
variables $x$ of the set of solutions of the following system of $n+2k+1$
equations in the $n+2k+1$ variables $x,\lambda,\mu,u$
\begin{eqnarray}
 \Phi\, x &=& C\;,\label{systlinbis}\\
Q_i(x)&=&0\qquad (i=1,\ldots,k)\;,\label{dansM}\\
G(x)\, u &=& (\langle\nabla Q_i(x),\nabla
P(x)\rangle)_{i=1,\ldots,k}\;,\label{defu}\\
{}^t a\,x&=&b\;,\label{hyp}
\end{eqnarray}
where $\Phi= \Phi(\lambda,\mu,u,H)$ and
$$C=\left(H_0- \sum_{i=1}^k u_i\,H_i\right)\left(\sum_{i=1}^k
u_iL_i-L_0\right) + \lambda L_0
+\sum_{i=1}^k \mu_iL_i\;.$$
We denote by $\widehat \Phi$ the augmented matrix $(\Phi,C)$. The coefficients
of $\widehat\Phi$ are polynomials in $\lambda,\mu,u$ of total degree $\leq 2$.
The equations (\ref{dansM}) and (\ref{defu}) are quadratic in $x$.

First we solve the linear system (\ref{systlinbis}) in $x$ over pieces of the
space $E$ of parameters $(\lambda,\mu,u)$ where a uniform choice of free
variables among $x$ can be made.
Let $\delta=(s,I,J)$ where $s$ is a nonnegative integer $\leq c(k)$ ($c(k)$ as
in Proposition \ref{smallcorank}) and $I$ and $J$ are subsets of
$\{1,\ldots,n\}$ of cardinal $s$. Denote by $\Phi_{I,J}$ the
$(n-s)\times(n-s)$ submatrix of $\Phi$ obtained by deleting all rows with index
in $I$ and all columns with index in $J$. Set
$$B_\delta = \{(\lambda,\mu,u)\in E \mid \det(\Phi_{I,J})\neq 0 \mbox{ and }
\mathrm{rank}(\widehat{\Phi})=n-s\}\;.$$
The semialgebraic set $B_\delta$ is described by an inequality
($\det(\Phi_{I,J})^2> 0$) and
an equation (expressing that the sum of the squares of all
$(n-s+1)\times(n-s+1)$ minors extracted from $\widehat\Phi$ is zero)  in
$(\lambda,\mu,u)$, both  of total degree $\leq 4n$. The total
number of $B_\delta$'s is $\sum_{s=0}^{c(k)} {n \choose s}^2\leq
(c(k)+1)\,n^{2c(k)}$.

Fix a $\delta=(s,I,J)$. For every $(\lambda,\mu,u)\in B_\delta$, we can choose
$x_J=(x_j)_{j\in J}$ as free variables and describe, using Cramer's formula, the
solutions of the linear system (\ref{systlinbis}) as $x(\lambda,\mu,u,x_J)$
where the coordinates are rational functions of total degree $\leq 2n+1$ with a
common denominator.
Substituting this solution for $x$ in the remaining equations (\ref{dansM},
\ref{defu}, \ref{hyp}) and adding the equation and the inequality describing
$B_\delta$, we get a system $\Sigma_\delta$ of $2k+2$ equations and one
inequality  in $\leq 2k+1+c(k)$ unknowns $\lambda,\mu,u,x_J$ of total
degree $\leq 4n+3$. The number of connected components of the set of solutions
of $\Sigma_\delta$ is bounded from above by $(4n+3)(8n+5)^{2k+1+c(k)}$ (see for
instance the bound in \cite[Proposition 4.13]{C} or more precise bounds in
\cite{BPR}).

Since $h\cap \theta_M(P)$ is covered by the images of the sets of solutions of
$\Sigma_\delta$'s by the mappings $(\lambda,\mu,u,x_J)\mapsto
x(\lambda,\mu,u,x_J)$, the
number of connected components of $h\cap \theta_M(P)$ is bounded from above by
$$p_k(n) = (c(k)+1)\,n^{2c(k)}(4n+3)(8n+5)^{2k+1+c(k)}\;.$$
\end{proof}

\subsection{Putting all together}

\begin{theorem}\label{gdsmooth}
 Let $M$ be the intersection of the closed unit ball $\overline{B}^n\subset
\R^n$
with the intersection $X$ of $k$ quadrics $Q_i(x)=0$, such that $M$ is a
submanifold without boundary of $B^n$ and a complete intersection of codimension
$k$. Then the sum of the geodesic
diameters of the connected components of $M$ is bounded from above by a
polynomial  $q_k(n)$ of of degree $O(k)$.
\end{theorem}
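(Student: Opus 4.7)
The plan is to assemble the three main ingredients developed so far in Section~2: the D'Acunto-Kurdyka bound of the geodesic diameter by the length of the thalweg, the Cauchy-Crofton formula, and the bound of Proposition~\ref{nbinters} on the number of connected components of $h\cap\theta_M(P)$. First I would treat the generic case. Fix a quadratic Morse function $P$ satisfying (ii) such that $(P,Q_1,\ldots,Q_k)$ lies in the intersection $\mathcal{V}\cap\mathcal{U}'$ of the two dense open subsets provided by Propositions~\ref{dimone} and~\ref{nbinters}. Then $\theta_M(P)$ is a $1$-dimensional semialgebraic set contained in $\overline{B}^n$, and the number of connected components of $h\cap\theta_M(P)$ is bounded by $p_k(n)$ for every affine hyperplane $h$.

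For almost every $h\in\mathbb{G}^{\mathrm{aff}}_{n-1,n}$, a standard Sard/transversality argument applied to the stratification of the $1$-dimensional semialgebraic set $\theta_M(P)$ shows that $h\cap\theta_M(P)$ is a finite set, so the number of intersection points equals the number of connected components and is at most $p_k(n)$. The Cauchy-Crofton formula then gives
$$\mathrm{length}(\theta_M(P))\;\leq\;p_k(n)\,\nu(n),$$
and the D'Acunto-Kurdyka proposition stated at the beginning of Subsection~2.1 bounds the sum of the geodesic diameters of the connected components of $M$ by $2\,p_k(n)\,\nu(n)$. Stirling's estimate yields $\nu(n)=O(\sqrt{n})$; combined with $p_k(n)$ being polynomial in $n$ of degree $O(k)$ (since $c(k)=O(\sqrt{k})$ and the exponents in the expression for $p_k(n)$ sum to $O(k)$), this gives the claimed polynomial $q_k(n)$ of degree $O(k)$.

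The remaining point is that the statement assumes only condition (i) on $M$, whereas the genericity hypothesis concerns the whole tuple $(P,Q_1,\ldots,Q_k)$. I would handle this by approximation: given $(Q_1,\ldots,Q_k)$ satisfying (i), perturb it within quadratic polynomials by small linear and constant terms as in~(\ref{perturb}) and pick $P$ so that $(\widetilde{P},\widetilde{Q}_1,\ldots,\widetilde{Q}_k)$ lies in $\mathcal{V}\cap\mathcal{U}'$; this is possible because $\mathcal{V}\cap\mathcal{U}'$ is open and dense in $(\R_2[X_1,\ldots,X_n])^{k+1}$. For sufficiently small perturbations, $\widetilde{M}$ remains a smooth complete intersection of codimension $k$ contained in $B^n$, and the generic bound $q_k(n)$ applies to the sum of its geodesic diameters. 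One then passes to the limit as the perturbation tends to zero, using the fact that for a smooth compact complete intersection the induced Riemannian metric depends smoothly on the defining equations, so the geodesic diameter is continuous.

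The main obstacle I expect is the limiting argument in the last step: one must check that no geodesic distance on $M$ is the strict limit of \emph{shorter} distances on $\widetilde{M}$ (equivalently, that the intrinsic diameter is actually continuous and not merely upper semicontinuous). For a fixed smooth compact manifold this follows from the fact that nearby Riemannian metrics are bi-Lipschitz equivalent with constants tending to $1$, and here $\widetilde{M}$ is diffeomorphic to $M$ via a diffeomorphism close to the identity, so the argument goes through; all other steps are direct applications of results already proved.
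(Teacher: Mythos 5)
Your proof follows the same overall strategy as the paper's: combine the D'Acunto--Kurdyka thalweg bound, the Cauchy--Crofton formula, and Proposition~\ref{nbinters} for a generic tuple $(P,Q_1,\ldots,Q_k)\in\mathcal{U}'\cap\mathcal{V}$, then transfer the bound to an arbitrary $M$ by approximation. Your explicit remark that for almost every $h$ the intersection $h\cap\theta_M(P)$ is finite (so connected components coincide with intersection points) is correct and is indeed needed to invoke Proposition~\ref{nbinters} inside Cauchy--Crofton, though the paper leaves it implicit.

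One small flaw in the approximation step: you restrict to perturbations of the form~(\ref{perturb}), which change only the quadratic part of $P$ and the affine parts of the $Q_i$. But membership in $\mathcal{V}$ (Proposition~\ref{nbinters}) requires, via Proposition~\ref{smallcorank}, that the \emph{full} tuple $(H_0,H_1,\ldots,H_k)$ of quadratic parts lie in the dense open set $\Omega\subset(\mathrm{Sym}_n)^{k+1}$, and density of $\Omega$ in that ambient space does not by itself guarantee that $\Omega$ meets the affine slice where $(H_1,\ldots,H_k)$ are held fixed arbitrarily close to your given point. You should allow small perturbations of the quadratic parts of all the $Q_i$ as well, as the paper tacitly does when it says any tuple in $\mathcal{U}$ can be approximated by a tuple in $\mathcal{U}'\cap\mathcal{V}$; the openness of $\mathcal{U}$ in $(\R_2[X_1,\ldots,X_n])^{k+1}$ ensures condition (i) survives the perturbation. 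Finally, your limiting argument --- bi-Lipschitz equivalence of the induced Riemannian metrics on diffeomorphic compact submanifolds that are $C^1$-close --- is a valid and more self-contained alternative to the paper's citation of \cite[Lemma 6.2]{DK2}, which works with Hausdorff convergence of the underlying sets; in this smooth, compact, complete-intersection setting the two routes give the same conclusion.
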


\begin{proof}{Proof.}
 Choose a quadratic Morse function $P$ on $M$ and assume
$(P,Q_1,\ldots,Q_k)$ is in the intersection of $\mathcal{U}'$ of Proposition
\ref{dimone} and $\mathcal{V}$  of Proposition \ref{nbinters}. Then 
 $\theta_M(P)$ is a semialgebraic set of dimension 1, and twice its length
bounds from above the sum of the geodesic diameters of the connected components
of $M$. By Proposition \ref{nbinters} and Cauchy-Crofton formula, the length of
$\theta_M(P)$ is bounded from above by $\nu(n)\,p_k(n)$, where $\nu(n)=
2\,\Gamma(\frac{1}{2})\,\Gamma(\frac{n+1}{2})\,\Gamma(\frac{n}{2})^{-1}\leq
\sqrt{\pi}\,n$. Hence the sum of the geodesic diameters of the connected
components of $M$ is bounded from above by $q_k(n)=2\sqrt{\pi}\,n\, p_k(n)$, a
polynomial in $n$ of degree $O(k)$.

Every $(P,Q_1,\ldots,Q_n)$ in $\mathcal{U}$ can be approximated by
$(\widetilde{P},\widetilde{Q}_1,\ldots,\widetilde{Q}_n)$ in the open dense
subset $\mathcal{U}'\cap\mathcal{V}$. Hence every $M$ as in the theorem can be
approximated in the Hausdorff topology by $\widetilde{M}$ for which the bound
on the sum of geodesic diameters holds. From this follows that the bound also
holds for $M$ (cf. \cite[Lemma 6.2]{DK2}).  
\end{proof}

\section{Semialgebraic sets defined by quadratic inequalities}

We now extend step by step the result obtained for smooth complete intersection
of
quadrics to the general case of semialgebraic sets defined by quadratic
equations and inequalities.

Let $Q_1,\ldots,Q_k$ be quadratic polynomials in $n$ variables. Set
$Q_0(x)=1-\sum_{i=1}^n x_i^2$. We first consider the case 
$$ M=\{x\in \R^n \mid Q_0(x)\geq 0,\ Q_1(x)\geq 0,\ldots,\ Q_k(x)\geq
0\}\subset \overline{B}^n\;,$$
where we assume moreover that the $Q_i(x)=0$ for $i=0,\ldots,k$ are normal
crossing divisors inside $\overline{B}^n$. That is, for all
$\{i_1\ldots,i_\ell\}\subset
\{0,\ldots,k\}$ and all $x\in \overline{B}^n$ such that
$Q_{i_1}(x)=\ldots=Q_{i_\ell}(x)=0$, then $\nabla
Q_{i_1}(x),\ldots,\nabla Q_{i_\ell}(x)$ are linearly independent.

\begin{proposition}\label{innc}
 With the assumption above, the sum of the geodesic diameters of the connected
components of $M$ is bounded by $r_k(n)$, a polynomial in $n$ of degree $O(k)$.
\end{proposition}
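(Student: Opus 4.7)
The plan is to reduce the statement to the smooth-complete-intersection case of Theorem~\ref{gdsmooth} by the classical slack-variable trick. Introduce auxiliary variables $y_0, y_1, \ldots, y_k$ and set
$$\widetilde{M} \,=\, \{(x, y) \in \R^n \times \R^{k+1} \mid y_i^2 - Q_i(x) = 0,\ i = 0, \ldots, k\},$$
which is the full zero set in $\R^{n+k+1}$ of the $k+1$ quadratic polynomials $\widetilde{Q}_i(x, y) = y_i^2 - Q_i(x)$. The natural projection $\pi : \widetilde{M} \to M$, $(x, y) \mapsto x$, is surjective (a preimage of $x \in M$ is $(x,\, \pm\sqrt{Q_0(x)}, \ldots, \pm\sqrt{Q_k(x)})$), continuous, and $1$-Lipschitz.

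The heart of the argument is to verify that $\widetilde{M}$ is a smooth complete intersection of codimension $k+1$. At $(x, y) \in \widetilde{M}$, the gradient of $\widetilde{Q}_i$ is $(-\nabla Q_i(x),\, 2 y_i e_i)$; the $y$-components of any linear dependence force the coefficient to vanish for every index $i$ with $y_i \neq 0$, and for the remaining indices $i \in I = \{i : y_i = 0\}$ one has $Q_i(x) = 0$, so the normal-crossing hypothesis yields linear independence of $\{\nabla Q_i(x)\}_{i \in I}$. Moreover $\widetilde{M}$ is bounded: $y_0^2 = 1 - \|x\|^2$ forces $x \in \overline{B}^n$, whence $y_i^2 = Q_i(x) \leq \max_{\overline{B}^n} Q_i$. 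Choose $R > 0$ such that $\|(x,y)\| < R$ for all $(x, y) \in \widetilde{M}$; after rescaling by $1/R$, the set $\widetilde{M}/R$ lies in $B^{n+k+1}$ and satisfies the hypotheses of Theorem~\ref{gdsmooth} (whose conclusion, by the Hausdorff-approximation argument at the end of its proof, holds for every admissible set, not only for generic ones). Rescaling back gives
$$\sum_i \mathrm{gd}(\widetilde{M}_i) \,\leq\, R \cdot q_{k+1}(n + k + 1),$$
a polynomial in $n$ of degree $O(k)$, where the sum runs over the connected components of $\widetilde{M}$.

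It remains to transfer the bound to $M$. For each connected component $M_j$ of $M$, fix a base point $x_j \in M_j$ and a lift $\widetilde{x}_j \in \pi^{-1}(x_j)$, and let $\widetilde{M}_j$ denote the component of $\widetilde{M}$ containing $\widetilde{x}_j$. A generic path in $M_j$ crosses each hypersurface $\{Q_i = 0\}$ transversally at finitely many points, where the sign of $\pm \sqrt{Q_i}$ can be switched continuously; path-lifting therefore gives $\pi(\widetilde{M}_j) = M_j$, and the chosen $\widetilde{M}_j$ are distinct for distinct $j$. Since $\pi$ is $1$-Lipschitz, any near-geodesic in $\widetilde{M}_j$ between two lifts projects to a path in $M_j$ of no greater length, whence $\mathrm{gd}(M_j) \leq \mathrm{gd}(\widetilde{M}_j)$, and summing yields $\sum_j \mathrm{gd}(M_j) \leq \sum_j \mathrm{gd}(\widetilde{M}_j) \leq R \cdot q_{k+1}(n+k+1) =: r_k(n)$. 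The main obstacle is establishing the smoothness of $\widetilde{M}$ at points where several $y_i$'s vanish simultaneously, which is exactly what the normal-crossing hypothesis rules out.
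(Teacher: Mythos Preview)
Your argument follows the same slack-variable reduction as the paper, and your verification that $\widetilde{M}$ is a smooth complete intersection (using the normal-crossing hypothesis exactly where several $y_i$ vanish) is correct. Your treatment of the connected-components transfer is in fact more careful than what is needed: it suffices that each component of $M$ is the image of \emph{some} component of $\widetilde{M}$, which your path-lifting argument establishes.

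There is, however, a genuine gap. Your rescaling radius $R$ is controlled by $\max_{\overline{B}^n} Q_i$, and this quantity depends on the particular polynomials $Q_1,\ldots,Q_k$, not just on $n$ and $k$. Consequently the bound $R\cdot q_{k+1}(n+k+1)$ you obtain is not a function of $n$ and $k$ alone, so it does not yield a universal polynomial $r_k(n)$ as the proposition claims. The paper avoids this by replacing $y_i^2 = Q_i(x)$ with $y_i^2 = a_i\,Q_i(x)$ for suitably small positive constants $a_i$ (chosen so that $a_i\,\max_{\overline{B}^n}|Q_i| < 1/(k+1)$); this forces $\sum_i y_i^2 < 1$, hence $\widehat{M}\subset 2\,B^{n+k+1}$, and after a homothety of ratio $1/2$ one lands in the unit ball with the universal factor $2$ replacing your $R$. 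Note that scaling $Q_i$ by $a_i>0$ changes neither the set $M$ nor the normal-crossing hypothesis nor the smoothness check for $\widetilde{M}$, so this one-line modification repairs your proof.
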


\begin{proof}{Proof.}
Let $y=(y_0,\ldots,y_k)$ be new variables and $a_0,\ldots,a_k$ real numbers such
that 
$$0<a_i< \frac{1}{(k+1) \max_{x\in \overline{B}^n} (|Q_i(x)|)}\;.$$
Set
$$\widehat{M}=\{(x,y) \in \R^n\times \R^{k+1} \mid y_i^2- a_i\,Q_i(x)=0,\
i=0,\ldots,k\}\;.$$
If $(x,y)\in \widehat{M}$, then $\Vert x\Vert^2\leq 1$ and $y_i^2 \leq 1/(k+1)$,
so $\widehat{M}$ is contained in the open ball $2B^n$ of radius 2. Moreover,
due to the assumption on normal crossings, $\widehat{M}$ is the smooth
complete intersection of $k+1$ quadrics in $\R^{n+k+1}$. Hence we can apply
Theorem
\ref{gdsmooth} (after a homothecy with ratio $1/2$) and obtain that the sum of
the geodesic diameters of the connected components of $\widehat{M}$ is bounded
from above by $r_k(n) = 2q_{k+1}(n+k+1) $, a polynomial in $n$ of degree $O(k)$.

Since $M$ is the image of $\widehat{M}$ by the projection $(x,y)\mapsto x$
which does not increase the lengths and induces a bijection between connected
components of $\widehat{M}$ and connected components of $M$, the same bound
holds for the sum of the geodesic diameters of the connected components of $M$.
\end{proof}

Next we turn to the case of an intersection of quadrics, without any assumption.

\begin{proposition}\label{interq}
 Let $X=\{x\in \R^n \mid Q_1(x)=\ldots=Q_k(x)=0\}$ be any intersection of $k$
quadrics in $\R^n$. Let $M$ be the intersection of $X$ with the closed unit
ball $\overline{B}^n$. Then the sum of the geodesic diameters of the connected
components of $M$ is bounded from above by $s_k(n)$, a polynomial in $n$ of
degree $O(k)$.
\end{proposition}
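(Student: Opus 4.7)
The plan is to reduce to a smooth complete intersection of quadrics in one higher dimension and invoke Theorem \ref{gdsmooth}, along the same lines as the proof of Proposition \ref{innc}, but with the extra step of a generic perturbation of the $Q_i$'s to secure smoothness. First I would absorb the ball constraint $1 - \|x\|^2 \geq 0$ into an equation by introducing one auxiliary variable: fix $a_0 \in (0,1)$ and set
\[
\widehat{M} = \{(x, y_0) \in \R^{n+1} \mid Q_1(x) = \cdots = Q_k(x) = 0,\ y_0^2 = a_0(1 - \|x\|^2)\}.
\]
Then $\widehat{M}$ is an intersection of $k+1$ quadrics in $\R^{n+1}$, contained in $B^{n+1}(\sqrt{1+a_0}) \subset B^{n+1}(\sqrt 2)$ since $\|x\|^2 \leq 1$ and $y_0^2 \leq a_0$ on $\widehat{M}$. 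The projection $\pi : (x, y_0) \mapsto x$ is $1$-Lipschitz, surjects onto $M$, and the preimage of each connected component of $M$ consists of one or two connected components of $\widehat{M}$; hence the sum of geodesic diameters of the components of $M$ is bounded above by the analogous sum for $\widehat{M}$.

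Next, I would make $\widehat{M}$ a smooth complete intersection via a generic perturbation of the $Q_i$'s. Take $\widetilde Q_i = Q_i + {}^tL_i\,x + c_i$ for $(L_i, c_i) \in \R^n \times \R$ in a small generic neighborhood of the origin, and denote by $\widetilde X = \{\widetilde Q_1 = \cdots = \widetilde Q_k = 0\}$, $\widetilde M = \widetilde X \cap \overline{B}^n$, and $\widetilde{\widehat{M}}$ the doubled set formed using the $\widetilde Q_i$. A Sard-Thom transversality argument---in the spirit of the one used in Section 2 for $\mathcal U$---gives, for a dense open set of perturbations, that: (i) $\widetilde X$ is a smooth complete intersection of codimension $k$ in $\R^n$; and (ii) $\widetilde X$ is transverse to the unit sphere $S^{n-1}$, meaning $x \notin \mathrm{span}(\nabla \widetilde Q_1(x), \ldots, \nabla \widetilde Q_k(x))$ whenever $x \in \widetilde X$ with $\|x\| = 1$. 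Conditions (i) and (ii) together imply that the $k+1$ defining equations of $\widetilde{\widehat{M}}$ have linearly independent gradients at every point of $\widetilde{\widehat{M}}$: the gradient of the ball-absorbing equation is $(2a_0 x, 2y_0)$; when $y_0 \neq 0$ its nonzero last coordinate makes it independent from the $(\nabla \widetilde Q_i, 0)$, and when $y_0 = 0$ (hence $\|x\| = 1$) independence reduces to (ii). Thus $\widetilde{\widehat{M}}$ is a smooth complete intersection of $k+1$ quadrics in $\R^{n+1}$ sitting inside $B^{n+1}(\sqrt 2)$, and after a homothety of ratio $1/\sqrt 2$, Theorem \ref{gdsmooth} gives a bound $\sqrt 2\,q_{k+1}(n+1)$ on the sum of geodesic diameters of its components; this bound descends via $\pi$ to $\widetilde M$.

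Finally, letting the perturbation tend to $0$, $\widetilde M$ converges to $M$ in the Hausdorff topology and the uniform bound passes to the limit as in the closing argument of the proof of Theorem \ref{gdsmooth} (invoking \cite[Lemma 6.2]{DK2}). Setting $s_k(n) = \sqrt 2\,q_{k+1}(n+1)$ yields a polynomial in $n$ of degree $O(k)$, as required. The main technical obstacle I anticipate is condition (ii) above: because the ball-defining polynomial $1 - \|x\|^2$ is \emph{fixed} and cannot be perturbed, the transversality of $\widetilde X$ with the sphere has to be extracted from the freedom in the $\widetilde Q_i$ alone, via a Sard-type argument on the restriction of $(\widetilde Q_1, \ldots, \widetilde Q_k)$ to the semialgebraic locus $\{x \in S^{n-1} : x \in \mathrm{span}(\nabla \widetilde Q_1(x), \ldots, \nabla \widetilde Q_k(x))\}$; the dimension count there is the delicate point.
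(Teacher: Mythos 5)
Your proposal diverges from the paper's at the crucial approximation step, and the divergence introduces a genuine gap.

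The paper does not perturb the polynomials $Q_i$. Instead it thickens $M$ to the slab
$M_\epsilon = \{x\in\overline{B}^n \mid -\epsilon_i \leq Q_i(x) \leq \epsilon_i\}$,
chooses $\epsilon$ generic so that each $\pm\epsilon_i$ is a regular value (on $\R^n$ and on $S^{n-1}$), applies Proposition \ref{innc} to $M_\epsilon$, and lets $\epsilon\to 0$. The decisive feature is that $M_\epsilon\supset M$ for every $\epsilon$, so $M_\epsilon\to M$ in the Hausdorff topology is automatic, the points of $M$ witnessing a large geodesic diameter sit inside every $M_\epsilon$, and the passage to the limit via \cite[Lemma 6.2]{DK2} is sound. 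Your construction instead replaces $Q_i$ by $\widetilde Q_i = Q_i + {}^tL_i x + c_i$, which keeps the description as \emph{equations} rather than thickening to \emph{inequalities}. This breaks the containment $M\subset\widetilde M$ and, because $M$ may be singular, destroys Hausdorff convergence in general: for $Q(x)=x_1^2+x_2^2$ one has $M=\{0\}$, and a generic linear--constant perturbation $\widetilde Q = x_1^2+x_2^2+{}^tL x+c$ with $c>\|L\|^2/4$ gives $\widetilde M=\emptyset$. To salvage this you would have to show that the dense open set of perturbations furnishing smoothness and sphere-transversality still contains a sequence tending to $0$ along which $\widetilde M\to M$ in Hausdorff distance; this is an extra claim with no obvious proof, and the analogous claim in the smooth setting of Theorem \ref{gdsmooth} relied precisely on the regularity hypothesis (i) that is unavailable here.

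There is a second, related concern: even along a subsequence where $\widetilde M$ does converge to $M$ in Hausdorff distance, a perturbation of the equations can \emph{disconnect} $M$ near a singular point (e.g.\ the cone $x_1^2+x_2^2-x_3^2=0$ perturbed to $x_1^2+x_2^2-x_3^2=-c$ with $c>0$ becomes a two-sheeted hyperboloid), so that two points in one component of $M$ end up approximated by points in different components of $\widetilde M$. Your limiting step needs the uniform bound on the sum of geodesic diameters of the components of $\widetilde M$ to control that of $M$, and this compatibility of the component structure is not addressed. The slab $M_\epsilon\supset M$ avoids both problems at once, which is why the paper reduces to Proposition \ref{innc} on inequalities rather than to a perturbed complete intersection of equations. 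The remainder of your argument (the $y_0$-doubling of the ball constraint, the homothety, and the invocation of Theorem \ref{gdsmooth}) is fine and is essentially the same mechanism the paper uses inside Proposition \ref{innc}, but as written the proof is incomplete.
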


\begin{proof}{Proof.}
 For $\epsilon=(\epsilon_1,\ldots,\epsilon_k)$ with $\epsilon_i>0$, set
 $$M_\epsilon=\{x\in \overline{B}^n \mid -\epsilon_i\leq Q_i(x)\leq \epsilon_i,\
i=1,\ldots,k\}\;.$$
The semialgebraic Sard theorem \cite[Theorem 5.57]{BPR} imply that there is an
open dense subset
$\mathcal{W}$ of $\{\epsilon\in\R^k\mid \epsilon_i>0,\ i=1,\ldots,k\}$ such
that, for all $\{i_1,\ldots,i_\ell\}\subset\{1,\ldots,n\}$ and all
$(\delta_1,\ldots,\delta_\ell)\in \{-1,1\}^\ell$,
$(\delta_1\epsilon_{i_1},\ldots,\delta_\ell\epsilon_{i_\ell})$ is a regular
value of $(Q_{i_1},\ldots,Q_{i_\ell})$ and of its restriction to the unit
sphere $S^{n-1}$. For $\epsilon \in\mathcal{W}$, $M_\epsilon$ falls in the case
of
inequalities with normal crossings, and by Proposition \ref{innc} the sum of
the geodesic diameters of the connected components of $M_\epsilon$ is bounded
from above by $s_k(n)=r_{2k}(n)$, a polynomial in $n$ of degree $O(k)$.

Since $M$ is the limit for Hausdorff topology of $M_\epsilon$'s with
$\epsilon\in\mathcal{W}$, the same bound holds for the sum of the geodesic
diameters of the connected components of $M$ (cf. \cite[Lemma 6.2]{DK2}).
\end{proof}

Finally we arrive to the general situation.

\begin{theorem}
 Let $Q_1,\ldots,Q_k$ be quadratic polynomials in $n$ variables and let 
\begin{eqnarray*}
 \lefteqn{X=\{x\in \R^n \mid Q_1(x)=\ldots=Q_\ell(x)=0,\
Q_{\ell+1}\geq0,\ldots,Q_m(x)\geq0,}\hspace{5cm}\\ 
& & Q_{m+1}(x)>0,\ldots,Q_k(x)>0\}\;.
\end{eqnarray*} 
 Let $M$ be the intersection of $X$ with the closed unit
ball $\overline{B}^n$. Then the sum of the geodesic diameters of the connected
components of $M$ is bounded from above by $t_k(n)$, a polynomial in $n$ of
degree $O(k)$.
\end{theorem}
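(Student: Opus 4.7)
I plan to extend the argument of Proposition \ref{interq} in two stages: first, reduce the presence of strict inequalities to the non-strict case by a closure argument; then apply the perturbation scheme of Proposition \ref{interq} to the resulting set, which is defined by equalities and non-strict inequalities only.

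Set $M' = \{x \in \overline{B}^n : Q_i(x) = 0 \text{ for } i \leq \ell,\ Q_i(x) \geq 0 \text{ for } \ell < i \leq k\}$, obtained from the defining conditions of $M$ by replacing each strict $Q_j > 0$ by the non-strict $Q_j \geq 0$. Then $M \subset M'$, and $M$ is relatively open in $M'$. For generic choices of $(Q_1,\ldots,Q_k)$, the semialgebraic Sard theorem guarantees that each hypersurface $\{Q_j = 0\}$ (for $j > m$) meets $M'$ transversally, so $M$ is dense of full dimension in $M'$. A standard local path-perturbation then yields $d_M(x, y) = d_{M'}(x, y)$ for $x, y \in M$, and in particular the geodesic diameter of each connected component of $M$ is bounded by that of the corresponding component of $M'$. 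Thus it suffices to bound the sum of geodesic diameters of the connected components of $M'$.

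To bound $M'$, I follow the relaxation scheme of Proposition \ref{interq}. For small $\epsilon = (\epsilon_1,\ldots,\epsilon_k)$ with all $\epsilon_i > 0$, set
\[
M'_\epsilon = \{x \in \overline{B}^n : -\epsilon_i \leq Q_i(x) \leq \epsilon_i \text{ for } i \leq \ell,\ Q_i(x) \geq -\epsilon_i \text{ for } \ell < i \leq k\}.
\]
The semialgebraic Sard theorem, applied to all subtuples of the $Q_i$ and to their restrictions to the unit sphere $S^{n-1}$, provides an open dense set of $\epsilon$'s for which the $k+\ell$ hypersurfaces bounding $M'_\epsilon$ inside $\overline{B}^n$ satisfy the normal-crossing hypothesis of Proposition \ref{innc}. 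That proposition then bounds the sum of geodesic diameters of $M'_\epsilon$ by $r_{k+\ell}(n)$, polynomial in $n$ of degree $O(k)$. Since $M'_\epsilon \supset M'$ and $M'_\epsilon \to M'$ in Hausdorff topology as $\epsilon \to 0$, Lemma 6.2 of \cite{DK2} transfers the bound to $M'$, and hence to $M$ by the first stage.

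For non-generic $(Q_1,\ldots,Q_k)$, the density argument of the first stage may fail; in that case, one approximates $(Q_1,\ldots,Q_k)$ by a sequence of generic quadratics with the same equality/non-strict/strict structure, applies the two stages to the approximating sets, and transfers the uniform bound to $M$ via a final application of Hausdorff convergence and Lemma 6.2 of \cite{DK2}. The principal technical difficulty is the density step: one must verify that for generic $(Q_1,\ldots,Q_k)$ of the prescribed structural form, the subset of $M'$ where some $Q_j$ ($j > m$) vanishes is a proper lower-dimensional closed stratum, so that $M$ is dense in $M'$ of full dimension. This is handled by a stratified application of the semialgebraic Sard theorem to $M'$, taking care of the interaction between the $\{Q_j = 0\}$ and the equality/non-strict faces that together stratify $M'$.
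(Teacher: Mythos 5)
Your proposal diverges from the paper's proof in the crucial step of handling strict inequalities, and this divergence introduces a genuine gap.

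The paper's argument for passing from the closed case to the strict case goes ``from the inside out'': it supposes for contradiction that the sum of geodesic diameters of the components of $M$ exceeds the closed-case bound $t_k(n)$, picks witnessing pairs $(x_j,y_j)$ and paths $\gamma_j$ between them \emph{inside} $M$, and uses compactness of the $\gamma_j$ to find $\epsilon>0$ with $Q_i(\gamma_j(t))\geq\epsilon$ for all strict indices $i$. The set $M_\epsilon$ obtained by replacing $Q_i>0$ with $Q_i\geq\epsilon$ is then a set of the closed type that contains the $\gamma_j$'s and is contained in $M$; geodesic distances in $M_\epsilon$ dominate those in $M$, yielding the contradiction. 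This works for arbitrary $Q_i$ with no genericity requirement and no discussion of how $M$ sits inside its closure.

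Your argument instead goes ``from the outside in'': you replace $Q_j>0$ by $Q_j\geq0$ to get $M'\supset M$ and then assert $d_M(x,y)=d_{M'}(x,y)$ for $x,y\in M$ via a ``standard local path-perturbation.'' This is the gap. Since $M\subset M'$, one always has $d_M\geq d_{M'}$; you need the reverse inequality, i.e.\ that every path in $M'$ between points of $M$ can be perturbed into $M$ without lengthening it appreciably. That is false in general when removing a codimension-one set from a manifold-with-corners: it can disconnect a component (e.g.\ $Q_{m+1}(x)=x_1x_2$, where $\{Q_{m+1}\geq0\}$ is connected but $\{Q_{m+1}>0\}$ is not), or force detours around a slit, so the geodesic diameter of a component of $M$ can strictly exceed that of the containing component of $M'$. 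You acknowledge that genericity is needed, but neither the density claim nor, more importantly, the equality of geodesic distances, is established; ``dense of full dimension'' by itself does not imply equality of intrinsic metrics. The final approximation step for non-generic $(Q_1,\ldots,Q_k)$ also has a subtlety you do not address: Hausdorff convergence of the sets is not enough by itself, one must also control the matching of connected components and the one-sided semicontinuity of geodesic distance, which is the content of Lemma 6.2 of \cite{DK2}, applicable to the shrinking family $M_\epsilon\to M$ but not straightforwardly to the situation where $M$ is an open subset of $M'$. In short, the paper's compactness argument is simpler, requires no genericity for the strict part, and sidesteps exactly the issues your density argument leaves open.
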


\begin{proof}{Proof.}
 Assume first there is no strict inequality in the description of $X$. Then we
can treat the nonstrict inequalities as in the proof of Proposition \ref{innc}
to reduce to an intersection of quadrics and apply Proposition \ref{interq}. In
this way we obtain a polynomial bound $t_k(n)$ of degree $O(k)$ for the sum of
the geodesic diameters of the connected components of $M$.

Consider now $X$ as in the statement of the proposition and assume that the sum
of the geodesic diameters of the connected components $M_j$ ($j=1,\ldots,p$) of
$M$ is $>t_k(n)$. Pick a couple of points $(x_j,y_j)$ in each $M_j$ such that
the sum of the geodesic distances from $x_j$ to $y_j$ in $M_j$ is $>t_k(n)$.
Choose a path $\gamma_j$ from $x_j$ to $y_j$ in $M_j$ and set
$\epsilon=\min_{i=m+1,\ldots,k,\ j=1,\ldots,p,\
t\in[0,1]} (Q_i(\gamma_j(t))$. Let $M_\epsilon\subset M$ be obtained by
replacing $Q_i(x)>0$ with $Q_i(x)\geq \epsilon$ for $i=m+1,\ldots,k$. Then
$x_j$ and $y_j$ are in the same connected of $M_\epsilon$ and their geodesic
distance in this connected component is at least equal to their geodesic
distance in $M_j$. This contradicts the fact, established above, that the sum
of the geodesic diameters of the connected components of $M_\epsilon$ is at
most $t_k(n)$.
\end{proof}

\section{Questions}

One would get a much simpler proof for the crucial case of the smooth
intersection of quadrics (and also a better bound) if one could
consider only the case when the linear system (\ref{systlinbis}) in $x$ is
always of maximal rank. This would be all right if the assumption of maximal
rank only removes finitely many points from the one-dimensional set
$\theta_M(P)$ and does not affect the computation of its length. Unfortunately,
we have not been able to push this idea.\par\medskip

In the case when there is only one quadratic inequality, then the number of
connected components of $\overline{B}^n\cap \{x\in \R^n\mid Q(x)\geq 0\}$ is at
most two, and the geodesic diameter of a connected component of
$\overline{B}^n\cap \{x\in \R^n\mid Q(x)\geq 0\}$ is at most $2\pi$,
independently
of $n$ \cite{Mo}. 

So the first case when the question of the dependence on $n$ of the sum
of geodesic diameters is relevant is the case of two quadratic inequalities
(actually there are altogether three quadratic inequalities since we intersect
with the unit ball). What is the optimal exponent of $n$ for a
bound on the sum of the geodesic diameters of the connected components of
$\overline{B}^n\cap \{x \in \R^n\mid Q_1(x)\geq 0, Q_2(x)\geq 0\}\;$?

A similar problem for the maximal number of connected
components of an intersection of three quadrics has been recently solved by
Degtyarev, Kharlamov and Itenberg \cite{DKI}:
they obtain lower and upper bounds in $n^2$. However their very nice proof
relies on rather sophisticated arguments and gives no insight on metric
properties. The question remains of constructing families of quadratic
polynomials $Q_{1,n}, Q_{2,n}$ in $n$ variables such that the sum of the
geodesic diameter of $\overline{B}^n\cap \{x \in \R^n\mid Q_{1,n}(x)\geq 0,
Q_{2,n}(x)\geq 0\}$ tends to infinity with $n$.

 \bigskip
 \noindent Institut de Recherche MAthématique de Rennes\\
 Université de Rennes 1 et CNRS\\
 Campus de Beaulieu\\
35042 Rennes cedex, France\\
 \href{mailto:michel.coste@univ-rennes1.fr}{michel.coste@univ-rennes1.fr}
 
 \medskip
 \noindent Département de Mathématiques\\
 Faculté des Sciences et Techniques\\
 Université Abdou Moumouni\\
 BP 10662 Niamey, Niger\\
\href{mailto:seydmoussa@yahoo.fr}{seydmoussa@yahoo.fr}
\end{document}